\tikzstyle{every picture} = [>=latex]
\def\ca#1{{\mathcal#1}}
\title{Sparse Graphs of Twin-width 2 Have Bounded Tree-width}
\author{Benjamin Bergougnoux}{University of Warsaw, Poland}{benjamin.bergougnoux@mimuw.edu.pl}{https://orcid.org/0000-0002-6270-3663}{}
\author{Jakub Gajarsk\'y}{University of Warsaw, Poland}{***}{https://orcid.org/***}{}
\author{Grzegorz Gu\'spiel}{Masaryk University, Brno, Czech republic}{guspiel@fi.muni.cz}{https://orcid.org/0000-0002-3303-8107}{}
\author{Petr Hlin\v{e}n\'y}{Masaryk University, Brno, Czech republic}{hlineny@fi.muni.cz}{https://orcid.org/0000-0003-2125-1514}{}
\author{Filip Pokr\'yvka}{Masaryk University, Brno, Czech republic}{xpokryvk@fi.muni.cz}{https://orcid.org/0000-0003-1212-4927}{}
\author{Marek Soko{\l}owski}{University of Warsaw, Poland}{marek.sokolowski@mimuw.edu.pl}{https://orcid.org/0000-0001-8309-0141}{}
\authorrunning{B.\ Bergougnoux, J.\ Gajarsk\'y, G.\ Gu\'spiel, P.\ Hlin\v{e}n\'y, F.\ Pokr\'yvka, M.\ Soko{\l}owski}
\keywords{twin-width, tree-width, excluded grid, sparsity}
\begin{document}
\maketitle

\begin{abstract}
Twin-width is a structural width parameter introduced by Bonnet, Kim, Thomass\'e and Watrigant [FOCS 2020].
Very briefly, its essence is a gradual reduction (a contraction sequence) of the given graph down to a single vertex while maintaining
limited difference of neighbourhoods of the vertices, and it can be seen as widely generalizing several other traditional structural parameters.
Having such a sequence at hand allows to solve many otherwise hard problems efficiently.
Our paper focuses on a comparison of twin-width to the more traditional tree-width on sparse graphs.
Namely, we prove that if a graph $G$ of twin-width at most~$2$ contains no $K_{t,t}$ subgraph for some integer $t$,
then the tree-width of $G$ is bounded by a polynomial function of~$t$.
As a consequence, for any sparse graph class $\ca C$ we obtain a polynomial time algorithm which for any input graph $G \in \ca C$ either outputs a contraction sequence of width at most $c$ (where $c$ depends only on $\ca C$), or correctly outputs that $G$ has twin-width more than $2$.
On the other hand, we present an easy example of a graph class of twin-width $3$ with unbounded tree-width, showing that our result cannot be extended to higher values of twin-width.
%
%
\end{abstract}

\section{Introduction}

Twin-width is a relatively new structural width measure of graphs and relational structures introduced in 2020 by Bonnet, Kim, Thomass\'e and Watrigant~\cite{DBLP:conf/focs/Bonnet0TW20}.
Informally, the twin-width of a graph measures how diverse the neighbourhoods of the graph vertices are.
For instance, \emph{cographs}---the graphs which can be built from singleton vertices by repeated operations of a disjoint union and taking the complement---
are exactly the graphs of twin-width 0, which means that the graph can be brought down to a single vertex by successively identifying twin vertices.
(Two vertices $x$ and $y$ are called \emph{twins} in a graph $G$ if they have the same neighbours in $V(G) \setminus \{x,y\}$.)
Hence the name, \emph{twin-width}, for the parameter.

Importance of this new concept is clearly witnessed by numerous recent papers on the topic, such as the follow-up series
\cite{DBLP:conf/soda/BonnetGKTW21,DBLP:conf/icalp/BonnetG0TW21,DBLP:conf/stoc/BonnetGMSTT22,DBLP:conf/soda/BonnetKRT22,DBLP:conf/iwpec/BonnetC0K0T22,DBLP:journals/jacm/BonnetKTW22}
and more related research papers represented by, e.g.,
\cite{DBLP:journals/corr/abs-2110-03957,DBLP:conf/iwpec/BalabanH21,DBLP:journals/corr/abs-2102-06880,DBLP:conf/icalp/BergeBD22,DBLP:conf/wg/BalabanHJ22,DBLP:conf/icalp/GajarskyPPT22,DBLP:conf/stacs/PilipczukSZ22,DBLP:journals/corr/abs-2210-08620}.

In particular, twin-width, as a structural width parameter, has naturally algorithmic applications in the parameterized complexity area.
Among the most important ones we mention that the first order (FO) model checking problem --
that is, deciding whether a fixed first-order sentence holds in an input graph -- can be solved in linear FPT-time \cite{DBLP:journals/jacm/BonnetKTW22}.
This and other algorithmic applications assume that a contraction sequence of bounded width is given alongside with the input graph,
since in general we do not know how to construct such an sequence efficiently. 
Consequently, finding an efficient algorithm for computing twin-width of input graph $G$ together with its twin-width decomposition (known as a \emph{contraction sequence} of $G$) is a central problem in the area.
Currently, very little is known about this problem.
It is known that one can check whether graph has twin-width $0$ and $1$ and compute  the corresponding contraction-sequence in polynomial time~\cite{DBLP:journals/algorithmica/BonnetKRTW22}. On the other hand, in general the problem of deciding the exact value of twin-width is NP-hard, and in particular,  deciding whether a graph has twin-width~$4$ is NP-hard~\cite{DBLP:conf/icalp/BergeBD22}.
This means that even for fixed $k$, the best one can hope for is an approximation algorithm which for given input graph $G$ either correctly outputs that $G$ has twin-width more than $k$ or produces a contraction sequence of $G$ of width at most $k'$, where $k'$ is some fixed number with $k < k'$.
However, to the best of our knowledge, no such efficient algorithm is currently known, even for graph classes of twin-width $2$.


The importance and popularity of twin-width largely stems from the fact that it generalizes many well-known graph-theoretic concepts. For example, graph classes of bounded clique-width, planar graphs and more generally graph classes defined by excluding a fixed minor, posets of bounded width and various subclasses of geometric graphs have been shown to have bounded twin-width~\cite{DBLP:journals/jacm/BonnetKTW22,geom}. In many cases, the proof of having bounded twin-width is effective, meaning that for such class there exists $d$ and a polynomial algorithm which takes a graph $G$ as input and outputs a contraction sequence of $G$ of width at most $d$.

%

%
%

Apart from establishing that some well-known graph classes have bounded twin-width, there are also results relating twin-width to various graph classes from the other direction -- one considers graph classes of bounded bounded twin-width which are restricted in some sense, and proves that such classes fall within some well-studied framework.
Prime examples of this approach are results stating that  
$K_{t,t}$-free graph classes of bounded twin-width have bounded expansion~\cite{DBLP:conf/soda/BonnetGKTW21,sparse_tww} or that stable graph classes of bounded twin-width have structurally bounded expansion~\cite{stable_tww}.  Results of this form allow us to use existing structural and algorithmic tools which exist for (structurally) bounded expansion and apply them to restricted classes of bounded twin-width.

Our research is motivated by the following conjecture, which also fits into this line of research and which we learned about from É. Bonnet.

\begin{conjecture}
\label{conj}
Let $\ca C$ be a class of graphs of twin-width at most $3$ such that there exists $t$ such that no $G \in \C$ contains $K_{t,t}$ as a subgraph. Then $\ca C$ has bounded tree-width.
\end{conjecture}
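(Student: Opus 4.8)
The plan is to prove the contrapositive using the Excluded Grid Theorem, exploiting the sparsity that $K_{t,t}$-freeness forces on classes of bounded twin-width. Since every $G \in \ca C$ has twin-width at most $3$ and excludes $K_{t,t}$ as a subgraph, the class $\ca C$ has bounded expansion by \cite{DBLP:conf/soda/BonnetGKTW21,sparse_tww}; in particular there is $d = d(t)$ such that every $G \in \ca C$ is $d$-degenerate. It thus suffices to bound the tree-width of a $d$-degenerate graph of twin-width at most $3$ by a function of $d$ alone. So suppose for contradiction that $G$ is such a graph with enormous tree-width. By the Excluded Grid Theorem $G$ contains a large grid as a minor, and, since $G$ lies in a class of bounded expansion, a clean-up argument (contracting the small branch sets and re-routing internally disjoint connecting paths through the remaining vertices, with degeneracy controlling how often a vertex is reused) should upgrade this to a subdivision $W$ of the $k \times k$ wall sitting inside $G$, with $k \to \infty$ as $\mathrm{tw}(G) \to \infty$. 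The wall $W$ is the object we work with: it has maximum degree $3$, and its nested cycles give it the ``robust'' connectivity that a contraction process should find hard to untangle.

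The heart of the proof is to show that a width-$3$ contraction sequence $\ca S$ of $G$ cannot coexist with arbitrarily large such $W \subseteq G$. One cannot simply restrict $\ca S$ to $V(W)$, as twin-width is not monotone under subgraphs or topological minors. Instead I would trace $\ca S$ step by step, watching at each time $\tau$ the trigraph induced on the parts still meeting $V(W)$ together with the portion $W_\tau$ of $W$ not yet collapsed, and single out the first time $\tau_0$ at which some part $P$ \emph{essentially cuts} $W_{\tau_0}$ --- meaning $P \cap V(W_{\tau_0})$ is not contained in the vertex set of a single column of $W_{\tau_0}$. Before $\tau_0$ every contraction touching $W$ is ``wall-local'', and the degree-$3$ bound of walls keeps the local red picture bounded. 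At $\tau_0$ the part $P$ has, along $W_{\tau_0}$, neighbours on many different sides; a global pigeonhole over the $k$ columns of $W$ should then force a red-degree violation: if those neighbours are scattered across many parts, $P$ itself has red degree above $3$; if they are concentrated in few parts, those parts are ``fat'' across $W$ and one of them does. Either way some vertex of the red graph exceeds red degree $3$ around time $\tau_0$, contradicting that $\ca S$ has width $3$.

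The last paragraph is, by a wide margin, where the real work lies, and I expect two points to be genuinely delicate. First, one must control the interaction between $W$ and the rest of $G$: a single large ``external'' part adjacent homogeneously to all of $W$ would be invisible in the red graph and could mask exactly the red edges we need, so $K_{t,t}$-freeness and $d$-degeneracy must be invoked a second time to bound how much contraction outside $W$ can hide. Second --- and this is the crux --- the pigeonhole must be pushed to yield precisely the threshold $4$ for red degree, not merely ``some absolute constant''; it is this last quantitative squeeze that separates the statement for twin-width $3$ from the case of twin-width $2$, where the budget $2$ is so tight that the evasive configurations above essentially cannot arise, and any slack in the count here would leave the conjecture open precisely at its boundary value.
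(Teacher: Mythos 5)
The statement you are trying to prove is \emph{false}, and the paper in fact disproves it: \Cref{conj} is confirmed only for classes of twin-width at most $2$ and is refuted for twin-width $3$. The counterexample (the lemma in the paper's last subsection, illustrated in \Cref{fig:tww3example}) is the graph $G_N$ obtained from $N$ disjoint paths $P_1,\dots,P_N$ of length $N$ by adding, for each $i$, a new vertex adjacent to the $i$-th vertex of every path. This $G_N$ is $K_{2,2}$-free, has tree-width at least $N$ (it has $K_{N,N}$ as a minor, hence for large $N$ also contains large wall subdivisions), and yet admits a contraction sequence of width $3$: one contracts $P_1$ with $P_2$ by merging their $i$-th vertices one after another, which keeps every red degree at most $3$, and then repeats with $P_3$, and so on, sweeping the entire grid-like structure.

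This pinpoints exactly where your argument collapses, namely at the step you yourself flag as the crux. At the first time $\tau_0$ when a part $P$ ``essentially cuts'' the wall, its neighbours along $W_{\tau_0}$ need not be scattered over more than three parts, nor need any of those parts be ``fat'' in a way that forces red degree $4$: in the sweep above, the merged prefix of $P_1\cup P_2$ cuts across many columns while having red edges only to the unmerged remainder of $P_1$, the unmerged remainder of $P_2$, and the previously merged vertex behind it. So the pigeonhole yields red degree exactly $3$, never more, and the ``quantitative squeeze'' you hoped for fails precisely at the value $3$. The true threshold is $2$, and even there the paper's proof is structured differently from your sketch: rather than a first-bad-time analysis on the wall, it first extracts (\Cref{lem:step1}) four parts $X_1,X_2,X_3,X_4$ forming a red path with at least $4t$ vertex-disjoint $X_1$--$X_4$ paths inside their union, and then (\Cref{lem:step2}) shows via an invariant that no width-$2$ uncontraction can ever destroy this configuration, contradicting termination at the singleton partition. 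If you want a provable statement, restrict to twin-width $2$; for twin-width $3$ the conjecture is simply not true.
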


For graph classes of twin-width at most one can easily argue that Conjecture~\ref{conj} is true as follows.
It is known \cite{DBLP:conf/soda/BonnetKRT22} that if a graph class $\ca C$ is such that every $G \in \ca C$ has a contraction sequence in which red components (see Section~\ref{sec:prelims}) have bounded size, then $\ca C$  has bounded clique-width. Since in a contraction sequence of width one each red component has size one, this implies that graph classes of twin-width one have bounded clique-width.  In combination with the fact that graph classes of bounded clique-width which exclude $K_{t,t}$ as a subgraph have bounded tree-width, this proves the result.

However, for twin-width 2 or 3 we cannot assume that red components of trigraphs occurring in contraction sequences have bounded size, and so to attack Conjecture~\ref{conj} one has to employ a more fine-tuned approach by directly analyzing contraction sequences.

\paragraph*{Our contribution}

We confirm Conjecture~\ref{conj} for the case of graph classes of twin-width $2$ and disprove the conjecture for graph classes of twin-width $3$. Namely on the positive side we prove the following.

\begin{theorem}\label{thm:main}
Let $t$ be an integer and let $\ca C$ be a class of graphs of twin-width at most~$2$ such that no $G \in \ca C$ contains $K_{t,t}$ as a subgraph.
Then the tree-width of $\ca C$ is bounded by a polynomial function of~$t$, namely at most~$\ca O(t^{20})$.
\end{theorem}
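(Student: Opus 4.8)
The plan is to work directly with a width-$2$ contraction sequence of a graph $G \in \ca C$ and to argue that a large tree-width in $G$ would force a forbidden structure. Concretely, if $\mathrm{tw}(G)$ is huge then, by the polynomial grid-minor theorem, $G$ contains a large wall $W$ (a subdivided grid) as a subgraph; it suffices to bound the size of any wall that can appear in a $K_{t,t}$-subgraph-free graph of twin-width at most $2$. So I fix a wall $W \subseteq G$ together with a contraction sequence $G = G_n, G_{n-1}, \dots, G_1$ of width at most $2$, and I track how the vertices of $W$ get merged. Throughout, I will use the standard terminology of trigraphs, red edges and red components from Section~\ref{sec:prelims}, and the known fact that in a width-$2$ sequence every red component of every trigraph induces a graph of maximum red degree at most $2$, hence is a red path or red cycle.

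The heart of the argument is an analysis of the moment a "large" part of the wall first gets swallowed into a single bag. Run the sequence and stop at the first trigraph $G_i$ in which some part $X$ of $V(G_i)$, whose union of bags meets "many" branch vertices of $W$ in a connected sub-wall, becomes a red component; since red components here are paths/cycles, $|X|$ is small only in terms of the red structure, not in terms of how much of $W$ it covers, so one bag in $X$ must already contain a large chunk of the wall. Now I contract the rest of the wall inside that bag down by one more step and inspect the neighbourhood structure: a bag containing $\Omega(f(t))$ branch vertices of a wall, together with the bounded-red-degree constraint, forces the black (complete) adjacency between that bag and many other bags to carry the wall's edges, and a counting argument on the wall's edges shows that some bag must be black-adjacent to $t$ other bags each of which is black-adjacent to $t$ branch vertices — i.e.\ one uncovers $K_{t,t}$ as a subgraph of the original $G$, a contradiction. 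Bookkeeping the quantifiers through the grid-minor bound and through "many sub-walls inside a wall" is what yields the explicit $\ca O(t^{20})$.

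The main obstacle — and the part that needs genuine care rather than routine estimates — is the middle step: controlling what a single bag can contain at the moment it first absorbs a large portion of the wall. A bag is an arbitrary subset of $V(G)$, so a priori it could contain a huge, wild piece of the wall while all red edges stay within the bounded-width budget; the work is to show that this forces, in the trigraph, a dense black bipartite pattern between bags that "lifts" to a $K_{t,t}$ subgraph in $G$ (one must be careful that black edges of a trigraph correspond to \emph{all} edges between the two bags in $G$, whereas red edges give no guarantee, so the dense pattern must be certified using black edges only). Making this precise requires a clean invariant about how many wall branch vertices a red component can "see" through black edges at each step of the sequence, and tracking that invariant monotonically along the contraction sequence is where the real content of the proof lies.
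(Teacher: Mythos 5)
Your proposal starts in the same place as the paper (the polynomial grid-minor theorem giving a large wall, the reformulation via bags/parts along the contraction sequence), and you correctly identify that the crux is to exploit the fact that black edges between parts give complete bipartite graphs while red edges guarantee nothing. However, there are two genuine gaps, and the second one is fatal to the plan as stated.

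First, the stopping rule is not well-defined in a way that produces what you want. You want a single bag that contains a large chunk of the wall at the moment a red component first "covers" many branch vertices. But in a width-$2$ sequence, red components are paths or cycles of \emph{unbounded length} in the trigraph, so a red component can consist of very many bags, each containing only a handful of wall vertices. Nothing forces a single bag inside that red component to have absorbed a large piece. The paper avoids this by working with the \emph{un}contraction sequence and stopping at the first step at which every bag contains fewer than a threshold number of branch vertices of the mesh; then the bag that was just split must have had roughly twice the threshold, and one of its halves still has at least the threshold number of branch vertices. That is a clean, one-bag statement; your red-component formulation does not yield one.

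Second, and more importantly, you conclude by "uncovering $K_{t,t}$ as a subgraph," but this contradiction cannot be obtained directly at the stopping moment. The wall (or mesh) itself is $K_{t,t}$-free, so the forbidden pattern cannot live inside the bag you isolate; it would have to be witnessed by black edges in the trigraph, i.e.\ by a large bag having black neighbours totalling $\geq t$ vertices. But the $K_{t,t}$-freeness of $G$ precisely \emph{prevents} that: any bag of size $\geq t$ has its entire black neighbourhood summing to $<t$ vertices (this is the paper's Observation~\ref{obs:rededgeonly}, used throughout). So one cannot reach a $K_{t,t}$ contradiction in one shot after stopping; the hypothesis forbids it. The paper's contradiction is of a different nature: Lemma~\ref{lem:step1} extracts four parts $X_1,X_2,X_3,X_4$ of size $\geq t$ forming a red path with $\geq 4t$ disjoint $X_1$--$X_4$ paths inside their union, and Lemma~\ref{lem:step2} shows via a carefully designed potential ($s_j + \|N^b_j(X_2)\| + \|N^b_j(X_3)\| \geq 4t$) that such a configuration \emph{persists under every subsequent uncontraction}. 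Since the final partition is into singletons (where no part has size $\geq t$), the sequence cannot terminate — that is the contradiction. This persistence argument, with its invariant and the case analysis over which of $X_1,\dots,X_4$ gets split, is the real content of the proof and is entirely missing from your proposal.
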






Theorem~\ref{thm:main} allows us to apply the existing tools for bounded tree-width to sparse classes of twin-width $2$, making many hard problems efficiently solvable on such classes. 
Moreover, Theorem~\ref{thm:main} also leads to the following algorithmic corollary.

\begin{corollary}\label{cor:tww2seq}
Let $\ca C$ be a class of graphs such that there exists $t$ such that no $G \in \ca C$ contains $K_{t,t}$ as a subgraph. 
Then there exists a constant $c$ depending on~$\ca C$, and a polynomial-time algorithm which for every $G \in \ca C$ either outputs a contraction sequence of $G$ of width at most~$c$, or correctly outputs that $G$ has twin-width more than $2$.
\end{corollary}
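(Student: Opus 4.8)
The plan is to reduce the corollary to Theorem~\ref{thm:main} together with the classical effective fact that bounded tree-width implies bounded twin-width. Fix an integer $t$ witnessing that no $G\in\ca C$ contains $K_{t,t}$ as a subgraph, and let $k$ be the tree-width bound $\ca O(t^{20})$ provided by Theorem~\ref{thm:main}; this $k$ is a constant depending only on $\ca C$. Although Theorem~\ref{thm:main} is phrased for a graph class, applying it to the single-graph class $\{G\}$ for an arbitrary $K_{t,t}$-free graph $G$ yields the pointwise statement we actually need: if $G\in\ca C$ has twin-width at most $2$, then the tree-width of $G$ is at most $k$. Contrapositively, every $G\in\ca C$ of tree-width more than $k$ has twin-width more than $2$.

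I would then invoke two standard ingredients, each polynomial-time because $k$ is a fixed constant. First, the classical parameterized algorithm for tree-width (or a polynomial-time constant-factor approximation thereof), which either returns a tree decomposition of $G$ of width $\ca O(k)$, or certifies that the tree-width of $G$ exceeds $k$. Second, the effective ``tree-width to twin-width'' direction (see, e.g., \cite{DBLP:journals/jacm/BonnetKTW22}): there is a function $g$ and a polynomial-time procedure that, given $G$ together with a tree decomposition of $G$ of width $w$, outputs a contraction sequence of $G$ of width at most $g(w)$. Since $w = \ca O(k)$ is a constant depending only on $\ca C$, so is $c := g(w)$, and this $c$ is the constant promised by the corollary.

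The algorithm for the corollary is now immediate. On input $G\in\ca C$, run the tree-width algorithm with target $k$. If it reports that the tree-width of $G$ exceeds $k$, output ``$G$ has twin-width more than $2$''; this is correct by the contraposition above. Otherwise it returns a tree decomposition of $G$ of width $\ca O(k)$, which we feed to the second ingredient and output the resulting contraction sequence of width at most $c$; this output is, unconditionally, a valid contraction sequence of $G$ of width at most $c$. Hence the algorithm is correct in both branches, and its total running time is polynomial in $|V(G)|$ since all hidden constants depend only on $\ca C$.

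There is no genuinely difficult step: the corollary is a short consequence of Theorem~\ref{thm:main} plus textbook algorithmics. The only point that merits attention is that the tree-width test must be sound in the negative direction, and this is exactly what the implication ``twin-width at most $2$ implies tree-width at most $k$'' from Theorem~\ref{thm:main} guarantees -- a failure of the test can then be reported, with certainty, as twin-width exceeding $2$.
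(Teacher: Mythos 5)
Your proposal is correct and matches the paper's proof essentially verbatim: both reduce to \Cref{thm:main}, run a polynomial-time tree-width test (the paper uses Bodlaender's exact linear-time FPT algorithm; you allow either that or an approximation, which works just as well), use the contrapositive of \Cref{thm:main} to justify the negative answer, and convert a width-$\ca O(k)$ tree decomposition into a bounded-width contraction sequence via \cite{DBLP:journals/jacm/BonnetKTW22}. The paper spells out the intermediate route (tree decomposition $\to$ branch decomposition $\to$ boolean-width decomposition $\to$ contraction sequence) and notes that polynomial-time constructibility follows from an inspection of the proof in \cite{DBLP:journals/jacm/BonnetKTW22}, details you gloss over but that do not change the argument.
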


On the negative side, we exhibit an example of a graph class  $\ca C$ of twin-width $3$ such that no $G \in \ca C$ contains a $K_{2,2}\simeq C_4$ subgraph, and that $\ca C$ has unbounded tree-width.

\section{Related Definitions and Tools}
\label{sec:prelims}
We use standard graph-theoretic terminology and notation. All graphs considered in this paper are finite and simple, i.e., without loops and multiple edges.
For the sake of completeness, we include the following folklore definition and tool.

\begin{definition}[Tree-width~\cite{rs86}]
\label{def:tw}
A {\em tree-decomposition} of a graph $G$ is a pair $(X,T)$ where $T$ is a tree, whose vertices we call nodes, and $X = \{X_i~|~i \in V(T)\}$ is a collection of subsets of $V(G)$ such that 
\begin{enumerate}
\item $\bigcup_{i \in V(T)} X_i = V(G)$,
\item for each edge $vw \in E(G)$, there is $i \in V(T)$ such that $v, w \in X_i$ and,
\item for each $v \in V(G)$ the set of nodes $\{i~|~v \in X_i\}$ forms a subtree of $T$.
\end{enumerate}
The width of tree-decomposition $(\{X_i~|~i \in V(T)\}, T)$ is equal to 
$max_{i \in V(T)} \{\left|X_i\right| - 1\}$. The \emph{tree-width} of a graph $G$ is the minimum width over all tree-decompositions of $G$. 
\end{definition}

For a graph $G$ and $A, B \subseteq V(G)$, an $A-B$ path is a path with one endvertex in $A$ and the other endvertex in $B$. We will use Menger's theorem (see for example~\cite{diestel}).
\begin{theorem}[Menger's theorem]
Let $G$ be a graph and $A,B \subseteq V(G)$. The minimum size of a set $S \subseteq V(G)$ such that there is no $A-B$ path in $G$ is equal to the maximum number of disjoint $A-B$ paths in $G$.
\end{theorem}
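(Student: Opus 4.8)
\medskip
\noindent\textbf{Proof proposal.}
I would prove the two inequalities separately. The easy one is that the minimum size of an $A$-$B$ separator is at least the maximum number of pairwise (vertex-)disjoint $A$-$B$ paths: any separator $S$ meets every $A$-$B$ path, so if there are $k$ pairwise disjoint such paths then $S$ contains a distinct vertex of each, whence $|S|\ge k$.

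For the reverse inequality I would prove, by induction on $|E(G)|$, the statement that if every $A$-$B$ separator of $G$ has size at least $k$ then $G$ contains $k$ pairwise disjoint $A$-$B$ paths. If $G$ has no edges, then $A\cap B$ is an $A$-$B$ separator, so $|A\cap B|\ge k$ and the $k$ one-vertex paths suffice. Otherwise fix an edge $e=xy$ and pass to the contraction $G/e$. If every $A$-$B$ separator of $G/e$ still has size at least $k$, then by induction $G/e$ has $k$ disjoint $A$-$B$ paths, and lifting them to $G$ (at most one of them passes through the contracted vertex $v_e$, and there $v_e$ is replaced by $x$, by $y$, or by $x,y$ joined via $e$) gives $k$ disjoint $A$-$B$ paths in $G$. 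If not, $G/e$ has a separator $Y$ with $|Y|\le k-1$; then $v_e\in Y$, since otherwise $Y$ would already separate $A$ from $B$ in $G$, and $X:=(Y\setminus\{v_e\})\cup\{x,y\}$ is an $A$-$B$ separator of $G$ with $|X|\le k$, hence $|X|=k$ and $x,y\in X$.

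It remains to handle this last case, and this is the step I expect to require the most care. Let $G_A$ be the union, as a subgraph, of all $A$-$X$ paths of $G$ that meet $X$ only in their final vertex, and let $G_B$ be defined symmetrically with $B$ in place of $A$. A path of the kind defining $G_A$ cannot use the edge $e=xy$, both of whose endpoints lie in $X$; hence $e\notin E(G_A)$ and $e\notin E(G_B)$, so both subgraphs have strictly fewer edges than $G$. Moreover any $A$-$X$ separator of $G_A$ is an $A$-$B$ separator of $G$ (an $A$-$B$ path of $G$ avoiding it, truncated at its first vertex of $X$, would be an $A$-$X$ path of $G_A$ avoiding it), so $G_A$ has no $A$-$X$ separator of size less than $k$, and likewise for $G_B$. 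By induction $G_A$ has $k$ disjoint $A$-$X$ paths and $G_B$ has $k$ disjoint $X$-$B$ paths; since $|X|=k$, each family uses every vertex of $X$ exactly once as an endpoint and is otherwise disjoint from $X$. Using that $V(G_A)\cap V(G_B)=X$, one checks that gluing, at each $z\in X$, the $A$-$z$ path from the first family to the $z$-$B$ path from the second yields $k$ pairwise disjoint $A$-$B$ paths of $G$, completing the induction. An alternative I would keep in reserve is the Ford--Fulkerson-style proof: start from a maximum family of disjoint $A$-$B$ paths, run an alternating-walk search from the unsaturated vertices of $A$, and read off a separator of the right size from the last path-vertices reached; this trades the induction for a more delicate analysis of alternating walks.
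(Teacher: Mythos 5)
The paper does not prove Menger's theorem; it states it as a standard tool with a pointer to Diestel's textbook, so there is no in-paper proof to compare against. Your argument is the classical edge-contraction induction---essentially the proof in the very reference the paper cites---and it is correct.

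One step you gloss over is why $X\subseteq V(G_A)$ (and symmetrically $X\subseteq V(G_B)$): a priori some $z\in X$ might not lie on any $A$--$X$ path of $G$ meeting $X$ only at $z$, in which case $G_A$ would contain fewer than $k$ candidate endpoints and the inductive call could not produce $k$ disjoint $A$--$X$ paths landing on all of $X$. This is repaired by the minimality $|X|=k$: if some $z\in X$ were missing from $V(G_A)$, then every $A$--$B$ path of $G$ would first meet $X$ at a vertex other than $z$, so $X\setminus\{z\}$ would already be an $A$--$B$ separator of size $k-1$, a contradiction. Once $X\subseteq V(G_A)\cap V(G_B)$ is known, the $k$ disjoint $A$--$X$ paths must each meet $X$ in exactly one vertex (an interior hit at some $z'\in X$ would collide with the path whose endpoint is $z'$), and likewise for $G_B$; then $V(G_A)\cap V(G_B)=X$ makes the gluing at each $z\in X$ produce $k$ genuine, pairwise disjoint $A$--$B$ paths as you claim. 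Your backup sketch via augmenting alternating walks is also a valid route, just heavier on bookkeeping.
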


For an integer $N$, a graph $H$ is called an $N\times N$ \emph{wall} (or \emph{hexagonal grid})
if it consists of $N$ disjoint paths $P_1,\ldots,P_N$, each $P_i$ with vertices $v^i_1,\dots,v^i_N$ in this order in $P_i$, together with the edges given by the following rule: if both $i,j \in \{1,\ldots, N\}$ have the same parity and $i < N$, then $v^i_j$ is adjacent to $v^{i+1}_j$.

For an integer $N$, a graph $H$ is called an $N\times N$ \emph{cubic mesh} if the maximum degree of $H$ is $3$ and the following holds;
we can write $H=Q_1\cup Q_2$, where each $Q_i$, $i=1,2$, is formed as a vertex-disjoint union of $N$ paths, the ends of paths of $Q_i$ are disjoint from $V(Q_{3-i})$ for $i=1,2$,
and every component--path of $Q_1$ intersects every component--path of~$Q_2$ in precisely one (common) subpath.
These paths of $Q_1$ (resp.~of $Q_2$) are called the \emph{rows} (resp.~\emph{columns}) of the mesh~$H$,
and the vertices of $H$ of degree $3$ are called the \emph{branching vertices} of~$H$.
The intersection of any row and any column of $H$ must be a subpath of nonzero length since~$\Delta(H)=3$, and it contributes two branching vertices.

\begin{observation}\label{obs:wall2mesh}
Any subdivision of the classical $(2N+2)\times(2N+2)$ wall (hexagonal grid) contains an $N\times N$ cubic mesh as a subgraph.
See~\Cref{fig:wall}.
\end{observation}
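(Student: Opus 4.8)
The plan is to exhibit the $N\times N$ cubic mesh explicitly as a subgraph of the wall, reducing first to the unsubdivided case. Since subdividing an edge only inserts a vertex of degree~$2$, a subdivision of an $N\times N$ cubic mesh is again an $N\times N$ cubic mesh (its rows and columns merely get longer, their pairwise intersections stay single subpaths of positive length, the branching vertices keep degree~$3$, and the maximum degree stays at most~$3$), and a subdivision of a graph $G$ contains a subdivision of every subgraph of $G$. Hence it suffices to locate an $N\times N$ cubic mesh inside the $(2N+2)\times(2N+2)$ wall $W$ itself.

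Write $W$ with horizontal paths $P_1,\dots,P_{2N+2}$, vertices $v^i_1,\dots,v^i_{2N+2}$ along $P_i$, and the rung $v^i_jv^{i+1}_j$ present exactly when $i\equiv j\pmod 2$. The columns of $W$ split into the $N$ disjoint width-$2$ bands $\{2,3\},\{4,5\},\dots,\{2N,2N+1\}$, leaving columns $1$ and $2N+2$ unused; and inside each band $\{2k,2k+1\}$ one routes a single ``zigzag'' path $V_k$ that enters $W$ at $v^1_{2k+1}$, descends all the way to row $P_{2N+2}$, and in every intermediate row $P_i$ uses precisely the horizontal edge $v^i_{2k}v^i_{2k+1}$, arriving via a rung from $P_{i-1}$ and leaving via a rung to $P_{i+1}$ (the parities work out because consecutive rungs of a zigzag sit on columns of opposite parity). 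First I would spell out this alternating routing and then observe that $V_1,\dots,V_N$ are pairwise vertex-disjoint (disjoint bands), lie entirely within columns $2,\dots,2N+1$, and have both endpoints in the extreme rows $P_1$ and $P_{2N+2}$.

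Then I would set $Q_2:=V_1\cup\dots\cup V_N$, let $Q_1$ consist of any $N$ of the inner rows $P_2,\dots,P_{2N+1}$, say $Q_1:=P_2\cup\dots\cup P_{N+1}$, and put $H:=Q_1\cup Q_2\subseteq W$. It remains to check the defining properties of a cubic mesh: $\Delta(H)\le 3$ because $H$ is a subgraph of $W$; $Q_1$ and $Q_2$ are each a vertex-disjoint union of $N$ paths; the endpoints $v^i_1,v^i_{2N+2}$ of every row of $Q_1$ avoid $V(Q_2)$ (which sits in columns $2,\dots,2N+1$) and the endpoints of every column of $Q_2$ avoid $V(Q_1)$ (which sits in rows $2,\dots,N+1$); and finally every row $P_i$ of $Q_1$ meets every column $V_k$ of $Q_2$ in exactly the edge $v^i_{2k}v^i_{2k+1}$, whose two endpoints acquire degree~$3$ and are the branching vertices. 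This makes $H$ an $N\times N$ cubic mesh, as required; see \Cref{fig:wall}.

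I do not expect a serious obstacle here: the content is purely in matching the explicit construction to the (technical) definition of a cubic mesh, which \Cref{fig:wall} makes transparent. The reason for starting from a $(2N+2)\times(2N+2)$ wall rather than a smaller one is precisely this bookkeeping — a factor of roughly two is spent tiling the $2N$ interior columns into width-$2$ bands to accommodate the zigzag columns, while the outermost two columns (and, harmlessly, the two extreme rows) must be sacrificed so that no row-end or column-end lands on the opposite family of paths.
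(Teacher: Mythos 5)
Your proof is correct and takes essentially the same approach as the paper's (which gives no formal argument at all, just a pointer to \Cref{fig:wall}, whose caption describes exactly your construction: zigzag column paths in width-two vertical bands, straight horizontal row paths, with the shared horizontal edges providing the branching vertices). The reduction to the unsubdivided wall via ``a subdivision of a cubic mesh is a cubic mesh, and a subdivision contains subdivisions of all subgraphs'' is a clean way to make the figure-based argument rigorous, and the parity bookkeeping of the zigzag routing checks out against the wall definition with $N$ replaced by $2N+2$.
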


\begin{figure}[tbh]
	\centering
	\includegraphics[width=0.6\linewidth]{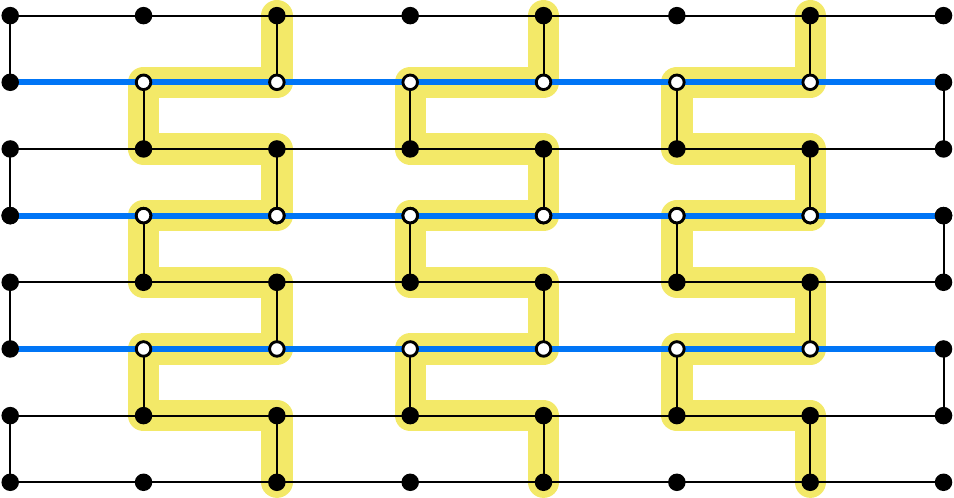}
	\caption{Example of an $8\times 8$ wall. We obtain a $3\times 3$ cubic mesh by considering the yellow paths as columns and the blue horizontal paths as rows. White filled vertices are the branching vertices of this cubic mesh.}
	\label{fig:wall}
\end{figure}

The notion of \emph{twin-width} can in general be considered over arbitrary binary relational structures of a finite signature,
but here we will define it and deal with it for only finite \emph{simple graphs}.
It is based on the following concept.

A \emph{trigraph} is a simple graph $G$ in which some edges are marked as \emph{red}, and with respect to the red edges only, 
we naturally speak about \emph{red neighbours} and \emph{red degree} in~$G$ 
(while the terms neighbour and degree without `red' refer to all edges of $G$ inclusive of red ones, and edges which are not red are also called \emph{black}). 
For a pair of (possibly not adjacent) vertices $x_1,x_2\in V(G)$, we define a \emph{contraction} of the pair $x_1,x_2$ as the operation
creating a trigraph $G'$ which is the same as $G$ except that $x_1,x_2$ are replaced with a new vertex $x_0$ (said to \emph{stem from $x_1,x_2$}) such that:
\begin{itemize}
\item 
the (full) neighbourhood of $x_0$ in $G'$ (i.e., including the red neighbours), denoted by $N_{G'}(x_0)$,
equals the union of the neighbourhoods $N_G(x_1)$ of $x_1$ and $N_G(x_2)$ of $x_2$ in $G$ except $x_1,x_2$ themselves, that is,
$N_{G'}(x_0)=(N_G(x_1)\cup N_G(x_2))\setminus\{x_1,x_2\}$, and
\item 
the red neighbours of $x_0$, denoted here by $N_{G'}^r(x_0)$, inherit all red neighbours of $x_1$ and of $x_2$ and add those in $N_G(x_1)\Delta N_G(x_2)$,
that is, $N_{G'}^r(x_0)=\big(N_{G}^r(x_1)\cup N_G^r(x_2)\cup(N_G(x_1)\Delta N_G(x_2))\big)\setminus\{x_1,x_2\}$, where $\Delta$ denotes the~symmetric set difference.
\end{itemize}

\begin{definition}[Contraction sequence and twin-width~\cite{DBLP:conf/focs/Bonnet0TW20}]\label{def:twinwidth}
A \emph{contraction sequence} of a trigraph $G$ is a sequence of successive contractions turning~$G$ into a single vertex,
and its \emph{width} $d$ is the maximum red degree of any vertex in any trigraph of the sequence.
We say that $G$ has or \emph{admits a $d$-contraction sequence} if there is a contraction sequence of $G$ of width at most~$d$.
The \emph{twin-width} of a trigraph $G$ is the minimum width over all possible contraction sequences of~$G$.

To define a contraction sequence and the twin-width of an ordinary graph~$G$, we consider $G$ as a trigraph with no red edges.
In a summary, a graph has twin-width at most $d$, if and only if it admits a $d$-contraction sequence.
\end{definition}

For our purpose, the following ``inverted'' view of a contraction sequence will be useful.
\begin{definition}[Uncontraction sequence]\label{def:uncontraction}
A \emph{partitioned graph} is a graph $G$ associated with an unordered vertex partition $\ca P=(P_1,\ldots,P_m)$ of~$V(G)$.
The \emph{partitioned trigraph} of $(G,\ca P)$ is a trigraph $H$ on the vertex set $V(H)=\ca P$ such that $\{P_1,P_2\}\in E(H)$
if and only if $G$ contains an edge from $P_1$ to $P_2$, and such edge $\{P_1,P_2\}$ is red if and only if \emph{not} all pairs of $P_1\times P_2$ are edges of~$G$.
An \emph{uncontraction sequence} of an $n$-vertex graph $G$ is a sequence of partitioned graphs $(G,\ca P^i)$ for $i=1,\ldots,n$,
where $\ca P^1=\{V(G)\}$ and $\ca P^n$ is the partition of $V(G)$ into singletons, and for $1<i\leq n$ the partition $\ca P^i$ is obtained
from $\ca P^{i-1}$ by splitting an arbitrary one part of $\ca P^{i-1}$ into two.
\end{definition}

It is easy to observe that if $G_0=G, G_1,\ldots, G_{n-1}$ is a contraction sequence of the $n$-vertex graph $G$,
then for the trigraph $G_{n-k}$ (on $k$ vertices) we may form the corresponding vertex partition $\ca P^{k}=(P_1^{k},\ldots,P_{k}^{k})$ of~$V(G)$
by setting $P_i^k$ to be the set of vertices of $G$ contracted into $w_i\in V(G_{n-k})$, $i=1,\ldots,k$.
The partitioned trigraph of $(G,\ca P^k)$ is hence isomorphic to the trigraph $G_{n-k}$ of the former contraction sequence,
and these two possible approaches to twin-width in \Cref{def:twinwidth} and \Cref{def:uncontraction} exactly coincide.


\section{Proof of Theorem~\ref{thm:main}}

Our proof proceeds by contradiction. Thus, we assume that there is a $K_{t,t}$-free graph $G$ of large tree-width which has twin-width at most $2$.
We then proceed in two steps:
\begin{itemize}\parskip3pt
\item \textbf{Step I:} Since $G$ has large tree-width, it has to contain a subdivision of a large wall, and hence a large cubic mesh, as a subgraph. 
Using this and the assumption that $G$ has an uncontraction sequence of width at most $2$, we show that there has to be a point in time during the uncontraction sequence such that there are four parts $X_1,X_2,X_3, X_4$ which form a red path in the corresponding trigraph and there are many disjoint paths of $G$ fully contained in $X_1\cup X_2 \cup X_3 \cup X_4$ with one endpoint in $X_1$ and end the other in $X_4$.
\item \textbf{Step II:} Using a carefully chosen invariant we show that the structure found in Step I can be maintained indefinitely during the subsequent steps of the uncontraction sequence (still of width at most~$2$). 
This yields a contradiction, since the final partition of the uncontraction sequence consists of singletons and there are no red edges.
\end{itemize}

So, we actually prove the following alternative formulation of \Cref{thm:main}.
\begin{theorem}\label{thm:mainalt}
If $G$ is a simple graph not containing a $K_{t,t}$ subgraph, but containing a subgraph (not necessarily induced) $H\subseteq G$ which is an $N\times N$ cubic mesh for $N=16\cdot(13t)^2$,
then $G$ is of twin-width at least~$3$.
\end{theorem}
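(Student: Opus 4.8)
The plan is to argue by contradiction along the lines sketched just before the statement: assume $G$ is $K_{t,t}$-free, contains an $N\times N$ cubic mesh $H$ with $N = 16\cdot(13t)^2$, and admits a $2$-contraction sequence, and derive a contradiction. I will work with the inverted view, i.e.\ an uncontraction sequence $(G,\ca P^1),\dots,(G,\ca P^n)$ of width at most~$2$ (every red degree $\le 2$), starting from the trivial one-part partition and refining down to singletons.

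\textbf{Step I (finding the initial configuration).} First I would track, through the uncontraction sequence, how the cubic mesh $H$ sits inside the current parts. At the start all of $H$ is in one part; at the end every vertex is its own part. The mesh has $\Theta(N^2)$ branching vertices and is highly connected in the sense that it contains $N$ vertex-disjoint row paths and $N$ vertex-disjoint column paths, every row crossing every column. Since the final partition is into singletons, at some point the parts must ``cut through'' the mesh; the key quantitative claim is that, because red degrees never exceed~$2$, the trigraph restricted to red edges is very sparse (a disjoint union of red paths and red cycles), and one can use this together with Menger's theorem to locate a moment where a short red path on four parts $X_1,X_2,X_3,X_4$ has the property that $G[X_1\cup X_2\cup X_3\cup X_4]$ still carries many (polynomially in $t$, in particular $\ge 13t$) pairwise vertex-disjoint $X_1$--$X_4$ paths. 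The intuition is: a family of $N$ disjoint mesh-paths needs many parts to be separated, but the black edges inside a part form a clique (biclique) which a $K_{t,t}$-free graph cannot afford to be large across, so the ``interface'' where the mesh gets split has to be channeled through red edges, and a red path of length~$3$ is the generic bottleneck shape one is forced into when the red graph has max degree~$2$.

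\textbf{Step II (propagating the configuration).} Having the four-part red path $X_1X_2X_3X_4$ with $\ge 13t$ disjoint $X_1$--$X_4$ paths inside, I would design an invariant asserting the persistence of ``a red path on a bounded number of parts carrying many internally-disjoint paths of $G$ between its ends,'' with the path-count decreasing only by a controlled amount at each uncontraction step, and show it survives every split. The case analysis is on which of the relevant parts gets split and how the new part boundary interacts with the flow of disjoint paths; here the $K_{t,t}$-freeness is used repeatedly to bound the number of paths that can be ``lost'' when a black (complete-bipartite) adjacency is broken into a red one, and the width-$2$ bound on red degree is used to ensure the red structure stays a short path rather than branching out. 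Since the final partition has no red edges at all while the invariant would force a nonempty red path (as long as the path-count stays positive, which the polynomial slack $N \gg t$ guarantees), we reach the desired contradiction.

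\textbf{Main obstacle.} The hard part will be Step~II: setting up an invariant that is simultaneously strong enough to guarantee a red edge at the end and robust enough to be maintained under every possible single-part split, including splits that reroute the disjoint paths or that temporarily enlarge the red path. Getting the bookkeeping right — exactly how many disjoint paths can be destroyed per step, and how the red path's length and the width-$2$ constraint interact when a part on the path is divided — is where the $\ca O(t^{20})$-type loss in Theorem~\ref{thm:main} (equivalently the $N = 16\cdot(13t)^2$ threshold here) comes from, and it is the delicate core of the argument; Step~I, by contrast, I expect to be a comparatively direct consequence of the sparsity of the red graph, Menger's theorem, and the excluded-biclique assumption.
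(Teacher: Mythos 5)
Your two-step outline matches the paper's structure (Step~I locates a red path $X_1X_2X_3X_4$ carrying many disjoint $X_1$--$X_4$ paths inside $G[X_1\cup\dots\cup X_4]$; Step~II propagates it), and your intuitions about why red degree~$\le 2$ forces a short red path and why $K_{t,t}$-freeness controls black adjacencies are correct. However, the mechanism you propose for Step~II contains a genuine gap that would make the argument fail as stated.

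You describe an invariant in which ``the path-count decreases only by a controlled amount at each uncontraction step'' and conclude that ``the polynomial slack $N\gg t$ guarantees'' the count stays positive. This cannot work: the uncontraction sequence has $|V(G)|-1$ steps, and $|V(G)|$ is not bounded in terms of~$t$ or~$N$. Any invariant whose quantity can strictly decrease at each step will eventually die, no matter how large the initial value. The paper's actual invariant is a \emph{conservation} statement, not a decay bound: it maintains $s_j + \|N^b_j(X_2)\| + \|N^b_j(X_3)\| \geq 4t$, where $s_j$ is the current number of disjoint $X_1$--$X_4$ paths and $\|N^b_j(\cdot)\|$ counts the vertices of $G$ sitting in parts black-adjacent to $X_2$ (resp.~$X_3$). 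The crucial point is that whenever a split costs you some disjoint paths, the same split adds exactly that much mass to a black neighborhood, so the sum never drops. Since $|X_2|,|X_3|\geq t$ and $G$ is $K_{t,t}$-free, each of $\|N^b_j(X_2)\|,\|N^b_j(X_3)\|$ is at most $t-1$; hence $s_j\geq 2t>0$ at \emph{every} step, forever, forcing a red path to persist into the final singleton partition --- contradiction. Without this compensation term there is no way to control the loss across an unbounded number of steps.

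A second, smaller point: Step~I is not quite ``a direct consequence of Menger and sparsity.'' The paper runs a careful counting argument over the $\Theta(N^2)$ branching vertices of the cubic mesh --- choosing the first moment every part has fewer than $4k^2$ branching vertices, isolating a part $Z$ with between $2k^2$ and $4k^2$ of them, showing $Z$ hits $\geq k$ rows, and then routing $k$ row-subpaths out of the red-neighbourhood cluster of $Z$ --- with Menger appearing only in one subcase to rule out small red neighbours on both sides of $Z$. Your sketch of Step~I is directionally right, but the choice $N=16(13t)^2$ comes out of this counting, and it is not automatic.
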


\subsection{Proving Step I}

We start with a trivial claim which is crucial in our arguments:
\begin{observation}\label{obs:rededgeonly}
Assume a partitioned graph $(G,\ca P)$ such that $G$ has no $K_{t,t}$ subgraph, and $X_1,X_2\in\ca P$ such that $|X_1|,|X_2|\geq t$.
Then the partitioned trigraph of $(G,\ca P)$ cannot contain a black edge $\{X_1,X_2\}$.
In other words, whenever $G$ has an edge from $X_1$ to $X_2$, there is a red edge $\{X_1,X_2\}$ in the partitioned trigraph of $(G,\ca P)$.
\end{observation}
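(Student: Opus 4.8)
The plan is to simply unwind the definition of the partitioned trigraph from \Cref{def:uncontraction} and apply $K_{t,t}$-freeness; I expect no real obstacle, since the statement is — as noted — trivial.

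First I would recall that in the partitioned trigraph $H$ of $(G,\ca P)$, the edge $\{X_1,X_2\}$ is present exactly when $G$ has at least one edge between $X_1$ and $X_2$, and that this edge is \emph{black} (i.e., not red) exactly when \emph{every} pair of $X_1\times X_2$ is an edge of $G$. So I would argue by contradiction: suppose $\{X_1,X_2\}$ is a black edge of $H$. Then $G$ contains all edges between $X_1$ and $X_2$.

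Next, since $X_1$ and $X_2$ are distinct parts of the partition $\ca P$, they are disjoint. Because $|X_1|,|X_2|\geq t$, I can pick arbitrary subsets $A\subseteq X_1$ and $B\subseteq X_2$ with $|A|=|B|=t$. By the previous step every vertex of $A$ is adjacent in $G$ to every vertex of $B$, and $A\cap B=\emptyset$, so $G[A\cup B]$ contains $K_{t,t}$ as a subgraph, contradicting the hypothesis that $G$ is $K_{t,t}$-free. Hence $\{X_1,X_2\}$ cannot be black, and whenever $G$ has an edge from $X_1$ to $X_2$ the corresponding edge $\{X_1,X_2\}$ of $H$ must be red — which is exactly the ``in other words'' reformulation.

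The only minor point worth flagging is the convention on which pairs are counted: the definition refers to ``all pairs of $P_1\times P_2$'', and since $P_1,P_2$ are disjoint this is unambiguous and coincides with the intended meaning of a complete bipartite subgraph between the two parts. Beyond that there is nothing to do; the ``hard part'' is really just making sure the quantifiers in the definition of a black edge are read correctly.
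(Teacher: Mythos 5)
Your proof is correct and is exactly the argument the paper has in mind; the paper states this as an unproved ``Observation'' precisely because the proof is the immediate unwinding of the definitions of black edge and $K_{t,t}$-freeness that you give.
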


The first step in the proof of \Cref{thm:main} is precisely formulated and proved next.


\begin{lemma}\label{lem:step1}
Let $t$ be an integer.
Assume that $G$ is a simple graph not containing a $K_{t,t}$ subgraph,
but containing a subgraph (not necessarily induced) $H\subseteq G$ which is an $N\times N$ cubic mesh, where $N=16\cdot(13t)^2$.
If there is an uncontraction sequence $(G,\ca P^i)$ for $i=1,\ldots,|V(G)|$ of width at most~$2$,
then there exists $m\in\{1,\ldots,|V(G)|\}$ such that the following holds:
There are parts $X_1,X_2,X_3,X_4\in\ca P^m$ which induce a red path in this order in the partitioned trigraph of $(G,\ca P^m)$,
there is no edge between $X_1$ and $X_4$,
and the set $X:=X_1\cup X_2\cup X_3\cup X_4$ induces in $G$ a subgraph containing $s\geq 4t$ pairwise vertex-disjoint paths from $X_1$ to~$X_4$.
\end{lemma}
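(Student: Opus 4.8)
The plan is to follow the first ``cut-up'' of the cubic mesh $H$ along the uncontraction sequence and to pin down the moment after which no single part can internally route $4t$ pairwise vertex-disjoint paths of $H$ any more. I would fix the $N$ pairwise vertex-disjoint \emph{row paths} $\pi_1,\dots,\pi_N$ and the $N$ pairwise vertex-disjoint \emph{column paths} $\gamma_1,\dots,\gamma_N$ of $H$, and record the structural facts used throughout: calling a part \emph{big} when it has at least $t$ vertices, \Cref{obs:rededgeonly} implies that every trigraph edge joining two big parts is red, so --- since red degrees are at most $2$ --- in every partition of the sequence the big parts together with the trigraph edges among them form a disjoint union of paths and cycles; moreover, any part through which $4t$ pairwise disjoint paths start, end, or pass is automatically big, so the four parts claimed in the conclusion will be big and will sit on one such ``big-path'' or ``big-cycle''.

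For a partition $\ca P$ let $\Phi(\ca P)$ be the largest $s$ for which some single part $Z\in\ca P$ contains, inside $G[Z]$, a family of $s$ pairwise vertex-disjoint subpaths of $s$ distinct rows, each joining a vertex of $\gamma_1$ to a vertex of $\gamma_N$. Then $\Phi(\ca P^1)\ge N$ because $G[V(G)]$ contains all of $\pi_1,\dots,\pi_N$, while $\Phi=0$ for the final partition into singletons. Let $m+1$ be the first index with $\Phi(\ca P^{m+1})<4t$; the split from $\ca P^m$ to $\ca P^{m+1}$ replaces a big part $Z$ carrying $\Phi(\ca P^m)\ge 4t$ disjoint ``left-to-right'' row-subpaths in $G[Z]$ by two parts $Z',Z''$ neither of which, nor any other part, still carries $4t$ of them. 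The heart of the argument is to analyse this single split: only boundedly many of those row-subpaths can lie entirely in $Z'$ or entirely in $Z''$, so a large number of them cross the cut $(Z',Z'')$ repeatedly, and since each such path is a subpath of a distinct row, and the rows traverse all columns $\gamma_k$, running the same ``only a short chain can carry it'' reasoning on the columns confines all the surviving routing to a bounded chain of consecutive big parts on one big-path. After throwing away rows and columns with the wrong boundary behaviour --- which is where the factor $16$ and one of the two copies of $13t$ in $N=16\cdot(13t)^2$ are spent --- this chain can be taken to consist of exactly four parts $X_1,X_2,X_3,X_4$ in order, carrying at least $13t$ disjoint subpaths that enter the chain through $X_1$ and leave it through $X_4$.

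It then remains to read off the statement. That the trigraph induced on $\{X_1,X_2,X_3,X_4\}$ is a red $P_4$ follows from the big-path structure together with the degree bound, as a chord $X_1X_3$ or $X_2X_4$ would raise some red degree to $3$; the one remaining possibility --- that $X_1X_2X_3X_4$ closes into a red $4$-cycle, making $X_1X_4$ a red edge --- is precisely why the lemma separately asserts that there is no edge between $X_1$ and $X_4$, and I would rule it out by choosing the four parts strictly inside a longer segment of the big-path when one exists, and otherwise by arguing with the two internally disjoint red $X_1$--$X_4$ paths of the $4$-cycle. Finally, Menger's theorem applied inside $G[X_1\cup X_2\cup X_3\cup X_4]$ to the $\ge 13t$ confined row-subpaths, after deleting the at most $3t$ that fail to reach $X_1$, the at most $3t$ that fail to reach $X_4$, and a further $3t$ to absorb degeneracies, leaves $s\ge 13t-9t=4t$ pairwise vertex-disjoint $X_1$--$X_4$ paths in $G[X]$, which is the second use of the constant $13t$.

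The step I expect to be the real obstacle is the split analysis in the middle paragraph: showing that when the ``rich'' part $Z$ is split, the disjoint mesh paths it was carrying cannot disperse among unboundedly many parts but must re-concentrate on a chain of only four consecutive big parts, with at least $13t$ of them still available. This is where the planarity-like connectivity of the mesh has to be combined with the fact that big parts form paths and cycles, and it is also the reason the hypothesis on $N$ is a somewhat generous polynomial in $t$; everything before and after it is pigeonholing together with a direct application of Menger's theorem.
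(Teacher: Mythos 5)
Your potential function $\Phi$ tracks, for each partition, the maximum number of pairwise disjoint $\gamma_1$--$\gamma_N$ row-subpaths that fit inside a \emph{single} part, and you fire at the first index $m+1$ where $\Phi$ drops below $4t$. The structural obstacle you flag at the end is indeed where the argument breaks, and I do not think it can be repaired along these lines: at that critical split the $\Phi(\ca P^m)\ge 4t$ disjoint row-subpaths you were carrying all live in $G[Z]$, and after $Z$ is split into $Z'$ and $Z''$ they still live in $G[Z'\cup Z'']$. There are therefore only \emph{two} parts in play, not a chain of four. The lemma's conclusion needs $s\ge 4t$ disjoint $X_1$--$X_4$ paths inside $G[X_1\cup X_2\cup X_3\cup X_4]$ with $X_1$ and $X_4$ non-adjacent (and \Cref{obs:pathlayout} forces each such path to visit all four parts); paths trapped inside two parts joined by a red edge cannot supply that, and there is no mechanism by which your confined routing ``re-concentrates on a chain of four consecutive big parts.'' Moreover, $Z'$ and $Z''$ are red-adjacent when both are big, which is exactly what the $X_1$--$X_4$ non-adjacency requirement forbids, so they cannot serve as the endpoints either.

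The paper sidesteps this by choosing a qualitatively different ``first-failure'' quantity: the number of \emph{branching vertices} of $H$ contained in a part. This quantity is monotone and additive under splits, so at the first moment when every part has fewer than $4k^2$ of them, one of the two newly created parts $Z$ still has $\ge 2k^2$. From that $Z$, one follows $k=13t$ rows \emph{out of} $Z$ and out of the red ball of radius~$2$ around $Z$ (the set $\ca Z=\{L_2,L_1,Z,R_1,R_2\}$), using the fact that every part in $\ca Z$ also holds few branching vertices and hence cannot absorb these rows. \Cref{obs:rededgeonly} and the red-degree bound then force at least $(k-5t)/2=4t$ of those row-paths to terminate in the unique red neighbour $L_3$ (or $R_3$) at red-distance~$3$ from $Z$, which is automatically non-adjacent to $Z$; that produces the four parts $L_3,L_2,L_1,Z$ directly, with Menger's theorem only entering in the degenerate case where both sides of the red path contain a small part. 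In short: the paper's potential measures local ``mass'' of the mesh and uses the row-paths to escape a neighbourhood, whereas yours measures full-width routability inside one part and has nowhere to escape to once the part is cut. You would need to replace $\Phi$ by something tracked over short red chains of parts rather than single parts before the rest of your plan (pigeonholing the chain down to length four, discarding badly behaved rows, applying Menger) could plausibly go through.
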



\begin{proof}
Set $k := 13t$.
Let $m\in\{1,\ldots,|V(G)|\}$ be the least index such that every part of $\ca P^m$ contains less than $4k^2$ branching vertices of~$H$.
Then there is a part $Z\in\ca P^m$ such that $Z$ contains at least $2k^2$ branching vertices of~$H$ (one of the two having resulted by the last splitting before~$\ca P^m$).
If $Z$ intersected less than $k$ rows and less than $k$ columns of $H$, by the condition on intersecting rows and columns we would have less than $2k^2$ branching vertices in $Z$ altogether.
Hence, up to symmetry, we may assume that $Z$ hits at least $k$ rows of $H$. Moreover, since $N= 16\cdot(13t)^2 = 16\cdot k^2$, we have that $Z$ contains branching vertices of less than $4k^2=N/4$ rows of $H$.

Let $L_2, L_1 ,Z,R_1,R_2\in\ca P^m$ denote the (at most) four parts that are connected to $Z$ by a red path of length $\leq2$ in the partitioned trigraph of $(G,\ca P^m)$.
That is, we have a red path on, $(L_2,L_1,Z,R_1,R_2)$ in this order (for simplicity, we silently ignore if some of the parts do not exist).
Again, each of $L_1,L_2,R_1,R_2$ contains branching vertices of less than $4k^2=N/4$ rows of~$H$.
Altogether, every row of $H$ hit by $Z$ has at least $2N-5\cdot N/4=3\cdot N/4>0$ branching vertices not contained in $\bigcup\ca Z$ where $\ca Z=\{L_2,L_1,Z,R_1,R_2\}$.
We have thus got, as subpaths of the rows hit by $Z$, a collection $\ca Q = \{Q_1,\ldots,Q_k\}$ of $k$ vertex-disjoint paths in $G$ which connect $Z$ to parts in $\ca P^m\setminus\ca Z$.
Let the paths in $\ca Q$ be chosen as inclusion-minimal and for any $Q_i\in \ca Q$ let  $a_i$ and $b_i$ be the endpoints of $Q_i$.

We now proceed assuming that all $L_1,L_2,R_1,R_2$ have size at least $t$, the case when one of $\{L_1,L_2\}$ or one of $\{R_1,R_2\}$ is smaller than $t$ is addressed below.
Every path $Q\in\ca Q$, by its minimality, connects a vertex of $Z$ to a vertex of $Y\in\ca P^m\setminus\ca Z$ where $Y$ is a neighbour (red or black) of the set $\ca Z$ in the partitioned trigraph of $(G,\ca P^m)$.
For any $X\in\{L_2,L_1,Z,R_1,R_2\}$, since $|X|>t$, the union of the parts adjacent to $X$ by black edges in the partitioned trigraph is of cardinality less than~$t$, or we have a $K_{t,t}$ subgraph.
Together, at most $5t$ of the paths of $\ca Q$ in $G$ may end in parts $Y\in\ca P^m\setminus\ca Z$ which are not red neighbours of $\ca Z$ in the partitioned trigraph.
Since we have at most two red neighbours of parts of $\ca Z$ among the parts of $\ca P^m\setminus\ca Z$ (namely, the ``outside'' red neighbours of $L_2$ and~of~$R_2$),
up to symmetry, the (unique) red neighbour $L_3\in\ca P^m\setminus\ca Z$ of $L_2$ contains ends of at least $(k-5t)/2=4t$ of the paths in~$\ca Q$.
In particular, $L_3$ has to exist, and $L_3$ is not adjacent to $Z$ in the partitioned trigraph by \Cref{obs:rededgeonly}.
We thus conclude by choosing $X_1=L_3$, $X_2=L_2$, $X_3=L_1$ and~$X_4=Z$.

Next, we address the case when exactly one of $\{L_1,L_2\}$ or $\{R_1,R_2\}$  contains $W$ such that $|W| < t$.
Without loss of generality assume that  $W \in \{R_1,R_2\}$ and so both $L_1$ and $L_2$ have size at least $t$. If $W = R_1$, set $\ca Z':= \{L_2,L_1,Z\}$, otherwise set $\ca Z':=\{L_2,L_1,Z,R_1\}$.
Every path $Q\in\ca Q$, by its minimality, connects a vertex of $Z$ to a vertex of $Y\in\ca P^m\setminus\ca Z'$ where $Y$ is a neighbour (red or black) of the set $\ca Z'$ in the partitioned trigraph of $(G,\ca P^m)$.
For any $X$ in $\ca Z$, since $|X|>t$, the union of the parts adjacent to $X$ by black edges in the partitioned trigraph is of cardinality less than~$t$, or we have a $K_{t,t}$ subgraph.
Together, at most $4t$ of the paths of $\ca Q$ in $G$ may end in parts $Y\in\ca P^m\setminus\ca Z'$ which are not red neighbours of $\ca Z'$ in the partitioned trigraph, and at most $t$ paths of $Q$ can go through $W$. Ignoring all these at most $4t$ paths, all the remaining $|Q| - 4t > 4t$ paths have to end in the red neighbor of $L_2$; call this neighbor $L_3$. 
We thus again conclude by choosing $X_1=L_3$, $X_2=L_2$, $X_3=L_1$ and~$X_4=Z$.

Finally, we consider the case when there is $W_L \in \{L_1,L_2\}$ with $|A| < t$ and $W_r \in \{R_1,R_2\}$ with $|W_R|<t$. We will show that this leads to a contradiction.
We first fix the choice of $W_L$ and $W_R$ more precisely. 
If both $L_1$ and $L_2$ have size less than $t$, then we choose $L_1$ as $W_L$ and similarly, if both $R_1$ and $R_2$ have size less than $t$, we choose $R_1$ as $W_R$.
Then in the path $L_2 L_1 Z R_1 R_2$ all parts between $W_L$ and $W_R$ have size at least $t$. 
Since each part $X$ between $W_L$ and $W_R$ has size at least $t$, the union of the parts adjacent to $X$ by black edges has size less than $t$, as otherwise we have a $K_{t,t}$ as a subgraph. 
Thus, the union of $W_L$, $W_R$, and all black neighbors of (at most $3$) parts between $W_L$ an $W_R$ has size at most $5t$. 
This means that there are at most $5t$ vertices which separate $\{a_1,\ldots,a_k\}$ from $\{b_1,\ldots, b_k\}$. Since there are $k > 5t$ disjoint paths between $A:=\{a_1,\ldots,a_k\}$ and $B:=\{b_1,\ldots, b_k\}$, this is a contradiction to Menger's theorem.
\end{proof}

\begin{observation}
\label{obs:pathlayout}
From \Cref{obs:rededgeonly} it follows that
the $s$ paths from $X_1$ to~$X_4$ claimed in \Cref{lem:step1} must each intersect also $X_2$ and $X_3$ (possibly many times there and back).
\end{observation}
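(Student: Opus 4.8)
The plan is a one-path discrete intermediate-value argument, applied to each of the $s$ paths in turn. Fix one such path $P$, with endpoints $a\in X_1$ and $b\in X_4$ and all of $V(P)$ inside $X=X_1\cup X_2\cup X_3\cup X_4$. The first thing to pin down is which pairs among $X_1,X_2,X_3,X_4$ can carry an edge of $G$. Since these four parts induce a red path $X_1X_2X_3X_4$ in the partitioned trigraph of $(G,\ca P^m)$ — so the only edges of that trigraph among them are $\{X_1,X_2\}$, $\{X_2,X_3\}$, $\{X_3,X_4\}$, and in particular $X_1$ and $X_4$ are non-adjacent there — the definition of the partitioned trigraph gives that $G$ has no edge between $X_1$ and $X_3$, none between $X_2$ and $X_4$, and none between $X_1$ and $X_4$. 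This is exactly where \Cref{obs:rededgeonly} enters: because each $X_i$ is large, any edge of $G$ between two of them would already force a (red) edge of the trigraph, hence one of the three path edges, so no ``shortcut'' edges $X_1X_3$, $X_2X_4$ can be hidden. Consequently every edge of $G$ with both endpoints in $X$ is either inside a single $X_j$ or joins two consecutively indexed parts $X_j$ and $X_{j+1}$.

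With that in hand, I would simply traverse $P$ from $a$ to $b$, recording for each vertex $v\in V(P)$ the index $j(v)\in\{1,2,3,4\}$ of the part $X_{j(v)}$ containing $v$. Consecutive vertices of $P$ are joined by an edge of $G$ lying in $X$, so their indices differ by at most $1$; moreover $j(a)=1$ and $j(b)=4$. A walk on $\{1,2,3,4\}$ with steps in $\{-1,0,+1\}$ that starts at $1$ and ends at $4$ must take the values $2$ and $3$, hence $P$ meets both $X_2$ and $X_3$. Running this for all $s$ paths yields the statement; the phrase ``possibly many times there and back'' just reflects that the index sequence along $P$ may oscillate, which is irrelevant to the conclusion.

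There is essentially no real obstacle here. The only point needing any care is the first paragraph, namely checking that the red-path structure (together with \Cref{obs:rededgeonly} and the parts being large) genuinely rules out the shortcut edges $X_1X_3$ and $X_2X_4$, since if either were present a path from $X_1$ to $X_4$ could bypass $X_2$ or $X_3$. Everything after that is the routine discrete intermediate-value observation.
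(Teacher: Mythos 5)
Your proof is correct and is essentially the argument the paper intends (the paper leaves it as an unproved observation). You identify that the red-path structure together with the stated non-adjacency of $X_1$ and $X_4$ leaves only $X_1X_2$, $X_2X_3$, $X_3X_4$ as possible edges among the four parts in the partitioned trigraph, hence in $G$, and the discrete intermediate-value step finishes it. One small redundancy: once you read ``induce a red path'' as meaning the induced subtrigraph is exactly that path, the definition of the partitioned trigraph alone already forces the absence of $G$-edges between $X_1,X_3$ and $X_2,X_4$; you do not additionally need \Cref{obs:rededgeonly} and largeness of the parts for that particular conclusion. (\Cref{obs:rededgeonly} is what ensured, inside the proof of \Cref{lem:step1}, that the constructed parts satisfy the non-adjacency hypotheses in the first place, which is why the paper cites it.)
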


\subsection{Proving Step II}

For our convenience, we introduce the following notation.
Given a~graph $G$, an~uncontraction sequence $(G, \ca P^i)$ for $i = 1, \dots, |V(G)|$, an~integer $j \in \{1, \dots, |V(G)|\}$, and $X \in \ca P^j$, we define $N_j^b(X)$ as the set of parts of $\ca P^j$ that have a~black edge to $X$ in the partitioned trigraph of $(G, \ca P^j)$; we call this set \emph{the black neighborhood of $X$ in $(G, \ca P^j)$}.
Also, we set $\| N_j^b(X) \| := \sum_{Y \in N_j^b(X)} |Y|$ to be the number of vertices of $G$ in any part of the black neighborhood of $X$ in $(G, \ca P^j)$.



\begin{lemma}\label{lem:step2}
Let $G$ be an arbitrary simple graph not containing a $K_{t,t}$ as a~subgraph, and let $(G,\ca P^i)$ for $i=1,\ldots,|V(G)|$ be an uncontraction sequence for $G$.
Suppose that, for some $m\in\{1,\ldots,|V(G)|\}$, there are parts $X_1,X_2,X_3,X_4\in\ca P^m$ which induce a red path in this order in the partitioned trigraph of $(G, \ca P^m)$, there is no edge between $X_1$ and $X_4$, and the set $X_1 \cup X_2 \cup X_3 \cup X_4$ induces in $G$ a subgraph containing $s \geq 4t$ pairwise vertex-disjoint paths from $X_1$ to $X_4$.
Then the width of this uncontraction sequence is greater than $2$.
\end{lemma}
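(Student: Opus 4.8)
The plan is to prove the statement by contradiction: assume the given uncontraction sequence has width at most~$2$, so that in the partitioned trigraph of each $(G,\ca P^j)$ every part has red degree at most~$2$. I will then show that the configuration guaranteed at step~$m$ --- a short red path carrying many vertex-disjoint paths of~$G$ between its ends --- can be transported forward through every one of the remaining uncontraction steps, all the way to the final partition into singletons, where it is manifestly impossible (singletons induce no red edges at all). Since each step changes $\ca P^j$ by splitting exactly one of its parts into two, the whole argument reduces to a single claim: the configuration survives one split.

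Concretely, I would maintain the following invariant at a step $j \ge m$: there are parts $A,B,C,D \in \ca P^j$ that induce a red path $A-B-C-D$ in this order in the partitioned trigraph of $(G,\ca P^j)$, there is no edge of~$G$ between $A$ and~$D$, and $A \cup B \cup C \cup D$ contains $s$ (the same $s \ge 4t$ as at step~$m$) pairwise vertex-disjoint paths of~$G$ from~$A$ to~$D$. Three facts will be used repeatedly. First, the red path being induced and every red degree being at most~$2$ force $A$ to have no red edge to $C$ or~$D$, and $B$ to have no red edge to~$D$. Second, since each of the $s$ disjoint paths has one end in~$A$ and one in~$D$ and --- by \Cref{obs:pathlayout} applied through the non-edges just listed --- must also visit $B$ and~$C$, each of the four parts has at least $s \ge 4t \ge t$ vertices. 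Third, by the $\| N_j^b(\cdot) \|$ estimate a part of size at least~$t$ in a $K_{t,t}$-free partitioned graph has fewer than~$t$ vertices in its black neighbourhood, so each of $A,B,C,D$ is cut off from the rest of~$G$ by its at most two red neighbours together with fewer than~$t$ further vertices. At $j=m$ the invariant is exactly the hypothesis of \Cref{lem:step2}; at the last step, where all parts are singletons and there are no red edges, it fails. Hence it suffices to show the invariant is preserved by a single uncontraction step.

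For preservation I would split into cases according to which part of~$\ca P^j$ is split. Splitting a part that is not among $A,B,C,D$ and is not one of the (at most two) additional red neighbours of~$A$ or~$D$ leaves the configuration untouched. Splitting an endpoint, say $A \mapsto A' \cup A''$: every vertex of~$A$ is an end of one of the $s$ disjoint paths, so the paths split between $A'$ and~$A''$; the red edge from~$A$ to~$B$ must land on exactly one of $A', A''$ (otherwise $B$ would acquire red degree~$3$), and one verifies that this half, together with $B,C,D$, again realises the invariant --- the red-degree bound and $K_{t,t}$-freeness controlling the new red edges, and a short connectivity argument (Menger) recovering the required disjoint paths once the other, essentially dead-end half is discarded. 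The crux, and the step I expect to be the genuine obstacle, is splitting an interior part, say $B \mapsto B' \cup B''$: the old red edges $A-B$ and $B-C$ may now attach to~$B'$, to~$B''$, or to both (subject only to no vertex acquiring red degree~$3$), so the ``corridor'' carrying the disjoint paths may widen to five parts, or the red path may have to lengthen to $A-B'-B''-C-D$. One must rule out the bad red patterns (the red-degree budget already kills the symmetric $C_4$ configuration $A-B'-C-B''-A$ and several others), keep the disjoint paths from being lost, and keep them confined to a controlled family of parts. Handling this cleanly is, I believe, exactly why the invariant actually used in the paper is more refined than the bare version above --- for instance allowing a bounded-length red path together with a slightly larger confining family of parts, recording that the disjoint paths are taken inclusion-minimal, and charging any unavoidable slack to a monotone potential built from the part sizes and the $\| N_j^b(\cdot) \|$ quantities that the splits cannot erode forever. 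With the interior-split case settled, the remaining cases are routine, and the impossibility at the all-singletons partition yields the desired contradiction.
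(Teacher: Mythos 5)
Your high-level strategy is the same as the paper's: maintain through every split a four-part red path with many vertex-disjoint $G$-paths across it, use the red-degree budget of $2$ and $K_{t,t}$-freeness to control where red edges can attach, and derive a contradiction at the all-singletons partition. You also correctly identify the genuine obstacle (splitting an interior part of the red path), correctly sense that the naive invariant with a fixed number~$s$ of disjoint paths will not survive such a split, and correctly guess that the fix involves a monotone potential tied to the $\|N_j^b(\cdot)\|$ quantities and to taking the disjoint paths inclusion-minimal. So the shape of the argument is right.

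But you stop exactly where the real work starts, and two of your speculations about how to finish are off-target, so this does not yet constitute a proof. The paper does \emph{not} lengthen the red path or enlarge the confining family of parts; it keeps exactly four parts $X_1,X_2,X_3,X_4$ throughout. The correct invariant is: these four parts form an induced red path, each has size at least~$t$, $X_1$ and $X_4$ are non-adjacent, and
\[
s_j + \|N_j^b(X_2)\| + \|N_j^b(X_3)\| \geq 4t,
\]
where $s_j$ is the maximum number of vertex-disjoint $X_1$--$X_4$ paths inside $G[X_1\cup X_2\cup X_3\cup X_4]$. The point of this potential is that each $\|N_j^b(X_i)\|$ is at most $t-1$ (since $|X_i|\geq t$ and $G$ is $K_{t,t}$-free), so $s_j\geq 2t$ is forced for all $j\geq m$, which in turn keeps all four parts large; and whenever a split costs you some paths (because, say, a piece $z$ split off from an endpoint or a middle part can carry only black edges), that loss is at most $|z|$ and is exactly compensated by $|z|$ vertices entering a black neighbourhood, so the potential never drops. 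When the split part is an interior one, say $X_2\mapsto y\cup z$, the paper shows that one of $y,z$ must be red-adjacent to $X_1$ and exactly one (say $y$) to $X_3$; then either $y$ is also red-adjacent to $X_1$ and the new red path is $X_1,y,X_3,X_4$ with the lost paths charged to $\|N^b(X_3)\|$ or $\|N^b(y)\|$, or $y$ is not red-adjacent to $X_1$, in which case both halves are large, $y$ and $z$ must be joined by a red edge, and the new red path is $z,y,X_3,X_4$ (dropping the old $X_1$ altogether) with $s_{j+1}\geq s_j$. You never pin down this invariant, never verify the nonadjacency and size conditions for the new four-tuple, and never perform the charging argument; the phrase ``I believe the invariant actually used in the paper is more refined'' is an acknowledgement of the gap rather than a bridge across it. To turn your outline into a proof you need to state the potential above, prove $s_j\geq 2t$ from it, and carry out the case analysis for $x\in\{X_1,X_4\}$, $x\in\{X_2,X_3\}$, and $x\notin\{X_1,\dots,X_4\}$ exactly as sketched.
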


\begin{proof}
For a contradiction, suppose that the width of the considered uncontraction sequence of $G$ is at most~$2$.
We are going to formulate an invariant which is true for $(G,\ca P^m)$, and which will remain true at every subsequent step of the uncontraction sequence.
Since this invariant, at the same time, will preclude the finest partition into singletons, the assumed sequence of width~$\leq2$ cannot exist.

\medskip

\textbf{Invariant.}
At step $j\geq m$ of the uncontraction sequence, in the graph $(G,\ca P^j)$ and its partitioned trigraph, the following holds.
There are 4 parts $X_1, X_2, X_3, X_4\in\ca P^j$, each of size at least $t$, forming a red path in this order in the partitioned trigraph of $(G, \ca P^j)$, and parts $X_1$ and $X_4$ are not adjacent.
Denote by $s_j$ the maximum number of vertex-disjoint paths in $G[X_1 \cup X_2 \cup X_3 \cup X_4]$, starting in $X_1$ and ending in $X_4$.
Then $s_j + \| N_{j}^{b}(X_2) \| + \| N_{j}^{b}(X_3) \| \geq 4t$.


\medskip

Note that in the base case we have $s_m \geq 4t$, so the invariant is trivially satisfied for $j = m$.
Also, without loss of generality we can assume that all the vertex-disjoint paths in $G[X_1 \cup X_2 \cup X_3 \cup X_4]$ are inclusion-wise minimal; so in particular, each path contains exactly one (starting) vertex in $X_1$ and exactly one (ending) vertex in $X_4$.
See also an informal illustration in \Cref{fig:X1toX4example}.

Now suppose the invariant holds for some $j \in \{m, \dots, |V(G)| - 1\}$ and prove that it is also preserved after the $j$-th uncontraction.
First observe that for each $i \in \{1, 2, 3, 4\}$, $G$ contains a~bipartite clique with sides $X_i$ and $\bigcup N_j^b(X_i)$. Since $|X_i| \geq t$ and $G$ does not contain $K_{t,t}$ as a~subgraph, we have $\|N_j^b(X_i)\| \leq t - 1$.
Therefore, $s_j \geq 4t - (t - 1) - (t - 1) \geq 2t$.
Each of the $s_j$ disjoint paths in $G[X_1 \cup X_2 \cup X_3 \cup X_4]$ must intersect each of the sets $X_1, X_2, X_3, X_4$ (\Cref{obs:pathlayout}), so actually $|X_i| \geq s_j \geq 2t$ for each $i \in \{1, 2, 3, 4\}$.

\begin{figure}[tbh]
	\centering
	\begin{tikzpicture}[scale=1]
\node [circle, draw,fill,color=white!90!red,minimum size=30 ] at (0,0) {};
\node [circle, draw,fill,color=white!90!red,minimum size=30 ] at (2,0) {};
\node [circle, draw,fill,color=white!90!red,minimum size=30 ] at (4,0) {};
\node [circle, draw,fill,color=white!90!red,minimum size=30 ] at (6,0) {};

\node [circle, draw,label={$X_1$},red,minimum size=30 ] at (0,0) (X1){};
\node [circle, draw,label={$X_2$},red,minimum size=30 ] at (2,0) (X2){};
\node [circle, draw,label={$X_3$},red,minimum size=30 ] at (4,0) (X3){};
\node [circle, draw,label={$X_4$},red,minimum size=30 ] at (6,0) (X4){};

\node [circle, draw,fill,color=white!90!black,minimum size=15 ] at (1,-1.5){};
\node [circle, draw,fill,color=white!90!black,minimum size=15 ] at (2,-1.5){};
\node [circle, draw,fill,color=white!90!black,minimum size=15 ] at (3,-1.5){};
\node [circle, draw,fill,color=white!90!black,minimum size=15 ] at (4,-1.5){};

\node [circle,black,fill,inner sep=1] at (0.9,-1.5) {};
\node [circle,black,fill,inner sep=1] at (1.1,-1.4) {};

\node [circle,black,fill,inner sep=1] at (1.9,-1.5) {};
\node [circle,black,fill,inner sep=1] at (2.0,-1.65) {};
\node [circle,black,fill,inner sep=1] at (2.1,-1.4) {};

\node [circle,black,fill,inner sep=1] at (2.9,-1.4) {};
\node [circle,black,fill,inner sep=1] at (3.1,-1.5) {};

\node [circle,black,fill,inner sep=1] at (3.9,-1.5) {};

\node [circle, draw,minimum size=15 ] at (1,-1.5) (nb1){};
\node [circle, draw,minimum size=15 ] at (2,-1.5) (nb2){};
\node [circle, draw,minimum size=15 ] at (3,-1.5) (nb3){};
\node [circle, draw,minimum size=15 ] at (4,-1.5) (nb4){};
\path [draw,decorate,decoration={brace,mirror,amplitude=7}] (nb1.south) -- (nb3.south) node[midway,below,yshift=-5]{$N^b_j(X_2)$};
\path [draw,decorate,decoration={brace,mirror,amplitude=7}] (nb3.south) -- (nb4.south) node[midway,below,yshift=-5]{$N^b_j(X_3)$};

\path [draw,very thick,red] (X1) -- (X2) -- (X3)  -- (X4);

\path [draw,very thick,black] (X1) -- (nb1) -- (X2) -- (nb3) -- (X3) -- (nb4);
\path [draw,very thick,black] (X2) -- (nb2);

\node [circle,green!60!black,fill,inner sep=1.5] at (0.3,0.2) (s1) {};

\node [circle,green!60!black,fill,inner sep=1.5] at (1.8,0.2) (s2) {};

\node [circle,green!60!black,fill,inner sep=1.5] at (2.2,0.2) (s3) {};

\node [circle,green!60!black,fill,inner sep=1.5] at (3.9,0.3) (s4) {};

\node [circle,green!60!black,fill,inner sep=1.5] at (1.9,-0.1) (s5) {};

\node [circle,green!60!black,fill,inner sep=1.5] at (4.1,-0.2) (s6) {};

\node [circle,green!60!black,fill,inner sep=1.5] at (6.2,-0.2) (s7) {};

\path [draw, thick, green!60!black] (s1) -- (s2) -- (s3) -- (s4) -- (s5) -- (s6) -- (s7);

\end{tikzpicture}
	\caption{Illustration of 4 big parts $X_1,X_2,X_3,X_4$ forming red path (red), an example of one of $s_j$ paths (green), and parts in black neighbourhoods $N^b_{j}(X_2)$, $N^b_{j}(X_3)$ (black).}
	\label{fig:X1toX4example}
\end{figure}
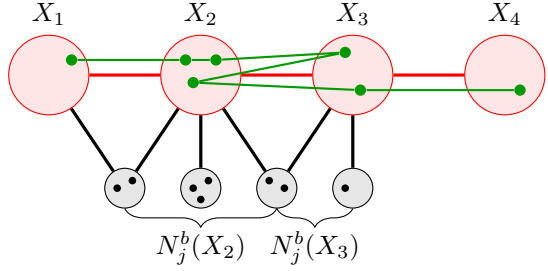



We now analyze all the possible cases for an uncontraction step. Denote by $x\in \ca P^j$ the part that is split 
into new parts $y,z\in \ca P^{j+1}$.
Also select some $s_j \geq 2t$ inclusionwise-minimal pairwise vertex-disjoint paths $P_1, \dots, P_{s_j}$ from $X_1$ to $X_4$ in $G[X_1 \cup X_2 \cup X_3 \cup X_4]$.
By minimality, \Cref{obs:rededgeonly} and \Cref{obs:pathlayout},
each path has its first vertex in $X_1$, its second vertex in $X_2$, its penultimate vertex in $X_3$, and its last vertex in $X_4$.
Also notice that there cannot be any edges between $X_1$ and $X_3$, or any edges between $X_2$ and $X_4$ (otherwise, \Cref{obs:rededgeonly} would apply and the red degree of $X_2$ or $X_3$ would be too large).
Recall also there are no edges between $X_1$ and $X_4$.

If $x\not \in \{X_1,X_2,X_3,X_4\}$, the black neighbourhood of $X_2$ and $X_3$ in the partitioned trigraph can only increase; also, $s_{j+1} = s_j$ as the vertex-disjoint paths in $G[X_1 \cup X_2 \cup X_3 \cup X_4]$ in $(G, \ca P^j)$ are preserved in the uncontraction step.
Hence, the invariant is satisfied.


Suppose $x=X_1$ or $x=X_4$. Without loss of generality, $x=X_1$ (or suppose the red path is actually $X_4, X_3, X_2 ,X_1$), as illustrated in \Cref{fig:X1toX4uncontraction-a}.
Since at the $j$-th step, there were at least $s_j \geq 2t$ inclusion-wise minimal pairwise vertex-disjoint paths from $X_1$ to $X_4$ in $G[X_1 \cup X_2 \cup X_3 \cup X_4]$, at least $t$ of these start in $y$ or at least $t$ of these start in $z$; again without loss of generality, assume that $y$ contains at least $t$ starts of the paths (so naturally $|y| \geq t$).
As each of those paths have its second vertex in $X_2$, we know that $yX_2$ is an~edge in the partitioned trigraph of $(G, \ca P^{j+1})$; and it must be a~red edge, because both $y$ and $X_2$ have size at least $t$ and \Cref{obs:rededgeonly} applies. 
We now claim that the red path $y, X_2, X_3, X_4$ preserves the invariant in $(G, \ca P^{j+1})$. 
Since $y, z \subseteq X_1$, both $y$ and $z$ are non-adjacent to $X_3$ and $X_4$.
Observe also that $z$ cannot be red-adjacent to $X_2$ in $(G, \ca P^{j+1})$ as in this case, the red degree of $X_2$ would be at least $3$.
However, $z$ can be either black-adjacent or non-adjacent to $X_2$.
If $z$ is non-adjacent to $X_2$, then $z$ cannot contain the start of any of the $s_j$ paths $P_i$ as the second vertex of each such path is in $X_2$.
Hence we do not lose any path $P_i$ by removing $z$ from $X_1 \cup X_2 \cup X_3 \cup X_4$, so $s_{j+1} = s_j$ and the invariant still holds.
If $z$ is black-adjacent to $X_2$, then by a~counting argument $z$ contains at most $|z|$ starts of the considered paths $P_i$; hence, $s_{j+1} \geq s_j - |z|$.
However, the number of vertices of $G$ in the black neighborhood of $X_2$ also increases by $|z|$, because $z$ was split out of $X_1$ which was a red neighbour of $X_2$.
Hence the inequality from the invariant is satisfied:
\[ s_{j+1} + \| N_{j+1}^b(X_2) \| + \| N_{j+1}^b(X_3) \| \geq (s_j - |z|) + (\| N_j^b(X_2) \| + |z|) + \| N_j^b(X_3) \| \geq 4t. \]
It is also easy to verify the remaining conditions of the invariant.


It remains to consider $x=X_2$ or $x=X_3$. Without loss of generality, $x=X_2$ (or suppose the red path $X_4,\ldots,X_1$).
We first prove that at least one of $y$, $z$ is red-adjacent to $X_1$ in $(G, \ca P^{j+1})$.
Assume otherwise; then $y$ is either non-adjacent to $X_1$ (and then $|N_G(X_1) \cap y| = 0$) or black-adjacent to $X_1$ (and then $|y| \leq t - 1$ by \Cref{obs:rededgeonly}, so $|N_G(X_1) \cap y| \leq t - 1$).
Analogously, $|N_G(X_1) \cap z| \leq t - 1$.
Since $X_2 = y \cup z$, we conclude that $|N_G(X_1) \cap X_2| \leq 2t - 2$.
But each of $s_j \geq 2t$ vertex-disjoint paths $P_1, \dots, P_{s_j}$ have two consecutive vertices in $X_1$ and $X_2$, so $|N_G(X_1) \cap X_2| \geq s_j$ -- a~contradiction.
Hence at least one of $y$, $z$ is red-adjacent to $X_1$.
Repeating the same argument with $X_3$ instead of $X_1$ implies that at least one of $y$, $z$ is red-adjacent to $X_3$.
Additionally, $X_3$ is already red-adjacent to $X_4$, so exactly one of $y$, $z$ is red-adjacent to $X_3$.
Without loss of generality, assume that $y$ is red-adjacent to $X_3$.
We now consider two separate cases, depending on whether $y$ is red-adjacent to $X_1$.

First assume that $y$ is red-adjacent to $X_1$, as illustrated in \Cref{fig:X1toX4uncontraction-b}.
We claim that the red path $X_1, y, X_3, X_4$ preserves the invariant in $(G, \ca P^{j+1})$.
Observe that $z$ cannot be red-adjacent to $y$ due to the red degree condition of $y$ in $(G, \ca P^{j+1})$.
If $z$ is non-adjacent to $y$ and $X_3$, no inclusion-wise minimal path from $X_1$ to $X_4$ in $G[X_1 \cup X_2 \cup X_3 \cup X_4]$ can be routed through $z$ as all neighbors of $z$ in $X_1 \cup X_2 \cup X_3 \cup X_4$ are in $X_1$. Then $s_{j+1} = s_j$ and it is easy to verify the remaining parts of the invariant.
Next suppose $z$ is black-adjacent to $X_3$; then by \Cref{obs:rededgeonly} we have $|z| \leq t-1$, hence $|y| = |X_2| - |z| \geq t$.
Also observe that at most $|z|$ paths $P_i$ intersect $z$, so $s_{j+1} \geq s_{j} - |z|$.
Therefore, the inequality from the invariant is preserved:
\[ s_{j+1} + \| N_{j+1}^b(X_2) \| + \| N_{j+1}^b(X_3) \| \geq (s_j - |z|) + \| N_j^b(X_2) \| + (\| N_j^b(X_3)  + |z|) \| \geq 4t, \]
and the remaining conditions of the invariant are easy to verify.
Finally suppose $z$ is non-adjacent to $X_3$ and black-adjacent to $y$.
Then each path $P_i$ must intersect $y$, hence $|y| \geq s_j > t$.
Moreover, $s_{j+1} \geq s_j - |z|$ and again $\| N_{j+1}^b(X_2) \| \geq \| N_j^b(X_2) \| + |z|$, so the inequality from the invariant is preserved; once again, the remaining parts of the invariant follow.

It remains to consider the case where $y$ is not red-adjacent to $X_1$ (i.e., either non-adjacent or black-adjacent to $X_1$), as illustrated in \Cref{fig:X1toX4uncontraction-c}.
We claim that $z, y, X_3, X_4$ is a~red path preserving the invariant in $(G, \ca P^{j+1})$.
We first prove that $|y|, |z| \geq t$.
First assume that $|y| \leq t - 1$.
Then $|z| = |X_2| - |y| \geq 2t - (t - 1) \geq t$, so by \Cref{obs:rededgeonly}, $z$ is not black-adjacent to $X_3$; and it is not red-adjacent to $X_3$ as $X_3$ is already red-adjacent to $y$ and $X_4$.
Thus $z$ is non-adjacent to $X_3$, so each path $P_i$ must intersect $y$ (this is because $X_2 = y \cup z$ and each path $P_i$ contains an~edge between $X_2$ and $X_3$).
But then $|y| \geq s_t > t - 1$ -- a~contradiction.
Similarly, if $|z| \leq t - 1$, then $|y| = |X_2| - |z| \geq t$, so by \Cref{obs:rededgeonly}, $y$ is not black-adjacent to $X_1$; and it is not red-adjacent to $X_1$ by our assumption.
So $y$ is non-adjacent to $X_1$ and each path $P_i$ must intersect $z$ -- a~contradiction.
Hence, $|y|, |z| \geq t$.
Applying \Cref{obs:rededgeonly} three times, we find that $y$ is non-adjacent to $X_1$; $z$ is non-adjacent to $X_3$; and $y$ is not black-adjacent to $z$.
Since $y$ must be adjacent to $z$ (otherwise $X_1$ and $X_4$ would be in separate connected components of $G[X_1 \cup X_2 \cup X_3 \cup X_4]$), we get that $y$ is red-adjacent to $z$.
Hence the subgraph of the partitioned trigraph of $(G, \ca P^{j+1})$ induced by $\{X_1, y, z, X_3, X_4\}$ contains red edges $X_1y$, $yz$, $zX_3$ and $X_3X_4$ and no black edges.
Therefore, the second vertex of each path $P_i$ is actually in $y$; so we can remove a~prefix from each path $P_i$ so that each path starts in $y$, finishes in $X_4$ and is contained in $y \cup z \cup X_3 \cup X_4$.
This witnesses that there exist $s_j$ vertex-disjoint paths from $y$ to $X_4$ in $G[y \cup z \cup X_3 \cup X_4]$, so $s_{j+1} \geq s_j$. Moreover, $\|N_{j+1}^b(z)\| = \|N_j^b(X_2)\|$ (as the black neighborhoods of $y$ and $z$ in $(G, \ca P^j)$ are equal to the black neighborhood of $X_2$ in $(G, \ca P^j)$), and $\|N_{j+1}^b(X_3)\| = \|N_j^b(X_3)\|$ (as the black neighborhood of $X_3$ did not change during the uncontraction).
We conclude that $s_{j+1} + \|N_{j+1}^b(z)\| + \|N_{j+1}^b(X_3)\| \geq 4t$, as required, and all the satisfaction of the remaining parts of the invariant is clear.

\begin{figure}[tbh]
	\centering
	\begin{subfigure}[t]{0.49\textwidth}
	\centering
	\resizebox{0.9\textwidth}{!}{
	\begin{tikzpicture}[scale=0.9]
	\node [circle, draw,fill, color=white!90!red, minimum size=25] at (0,0.7) (y) {};
	\node [circle, draw,fill, color=white!90!red, minimum size=25] at (0,-0.7) (z) {};
	\node [circle, draw,fill,color=white!90!red,minimum size=30 ] at (2,0) {};
	\node [circle, draw,fill,color=white!90!red,minimum size=30 ] at (4,0) {};
	\node [circle, draw,fill,color=white!90!red,minimum size=30 ] at (6,0) {};
	
	\node [ellipse, draw,label={$x=X_1$},red,dashed,minimum size=45,fit= (y) (z) ] at (0,0) (X2){};
	\node [circle, draw,minimum size=25 ] at (0,0.7) {$y$};
	\node [circle, draw,minimum size=25 ] at (0,-0.7) {$z$};
	\node [circle, draw,label={$X_2$},red,minimum size=30 ] at (2,0) (X2){};
	\node [circle, draw,label={$X_3$},red,minimum size=30 ] at (4,0) (X3){};
	\node [circle, draw,label={$X_4$},red,minimum size=30 ] at (6,0) (X4){};
	\path [draw,very thick,red] (z) -- (y) -- (X2) -- (X3)  -- (X4);
	\path [draw,very thick,dotted] (z.75) -- (y.-75);
	\path [draw,very thick] (z.100) -- (y.-100);
	\path [draw,very thick] (z) -- (X2);
	\path [draw,very thick,dotted] (z.5) -- (X2.210);
	
	\end{tikzpicture}
	}
	\caption{}
	\label{fig:X1toX4uncontraction-a}
	\end{subfigure}
	\begin{subfigure}[t]{0.49\textwidth}
	\centering
	\resizebox{0.9\textwidth}{!}{
	\begin{tikzpicture}[scale=0.9]
	\node [circle, draw,fill,color=white!90!red,minimum size=30 ] at (0,0) {};
	\node [circle, draw,fill, color=white!90!red, minimum size=25] at (2,0.7) (y) {};
	\node [circle, draw,fill, color=white!90!red, minimum size=25] at (2,-0.7) (z) {};
	\node [circle, draw,fill,color=white!90!red,minimum size=30 ] at (4,0) {};
	\node [circle, draw,fill,color=white!90!red,minimum size=30 ] at (6,0) {};
	
	\node [circle, draw,label={$X_1$},red,minimum size=30 ] at (0,0) (X1){};
	\node [ellipse, draw,label={$x=X_2$},red,dashed,minimum size=45,fit= (y) (z) ] at (2,0) (X2){};
	\node [circle, draw,minimum size=25 ] at (2,0.7) {$y$};
	\node [circle, draw,minimum size=25 ] at (2,-0.7) {$z$};
	\node [circle, draw,label={$X_3$},red,minimum size=30 ] at (4,0) (X3){};
	\node [circle, draw,label={$X_4$},red,minimum size=30 ] at (6,0) (X4){};
	\path [draw,very thick,red] (X1) -- (y) -- (X3)  -- (X4);
	\path [draw,very thick,red] (X1.-10) -- (z.150);
	\path [draw,very thick] (X1) -- (z) -- (X3);
	\path [draw,very thick] (y.-85) -- (z.85);
	\path [draw,very thick,dotted] (y.-95) -- (z.95);
	\path [draw,very thick,dotted] (X1.-30) -- (z.175);
	\path [draw,very thick,dotted] (z.5)-- (X3.210);
	
	\end{tikzpicture}
	}
	\caption{}
	\label{fig:X1toX4uncontraction-b}
	\end{subfigure}
	\begin{subfigure}[t]{0.45\textwidth}
	\centering
	\resizebox{\textwidth}{!}{
	\begin{tikzpicture}[scale=0.9]
	\node [circle, draw,fill,color=white!90!red,minimum size=30 ] at (0,0) {};
	\node [circle, draw,fill, color=white!90!red, minimum size=25] at (2,0.7) (y) {};
	\node [circle, draw,fill, color=white!90!red, minimum size=25] at (2,-0.7) (z) {};
	\node [circle, draw,fill,color=white!90!red,minimum size=30 ] at (4,0) {};
	\node [circle, draw,fill,color=white!90!red,minimum size=30 ] at (6,0) {};
	
	\node [circle, draw,label={$X_1$},red,minimum size=30 ] at (0,0) (X1){};
	\node [ellipse, draw,label={$x=X_2$},red,dashed,minimum size=45,fit= (y) (z) ] at (2,0) (X2){};
	\node [circle, draw,minimum size=25 ] at (2,0.7) {$y$};
	\node [circle, draw,minimum size=25 ] at (2,-0.7) {$z$};
	\node [circle, draw,label={$X_3$},red,minimum size=30 ] at (4,0) (X3){};
	\node [circle, draw,label={$X_4$},red,minimum size=30 ] at (6,0) (X4){};
	\path [draw,very thick,red] (X1) -- (z) -- (y) -- (X3)  -- (X4);
	\path [draw,very thick,dotted] (X1) -- (y);
	\path [draw,very thick,dotted] (X3) -- (z);
	
	\end{tikzpicture}
	}
	\caption{}
	\label{fig:X1toX4uncontraction-c}
	\end{subfigure}
	 \caption{Illustration of possible uncontractions from the proof of \Cref{lem:step2}. Multiple edge connections between parts show all possible edges, dotted edge means non-adjacency.}
		\label{fig:X1toX4uncontractions}
	\end{figure}
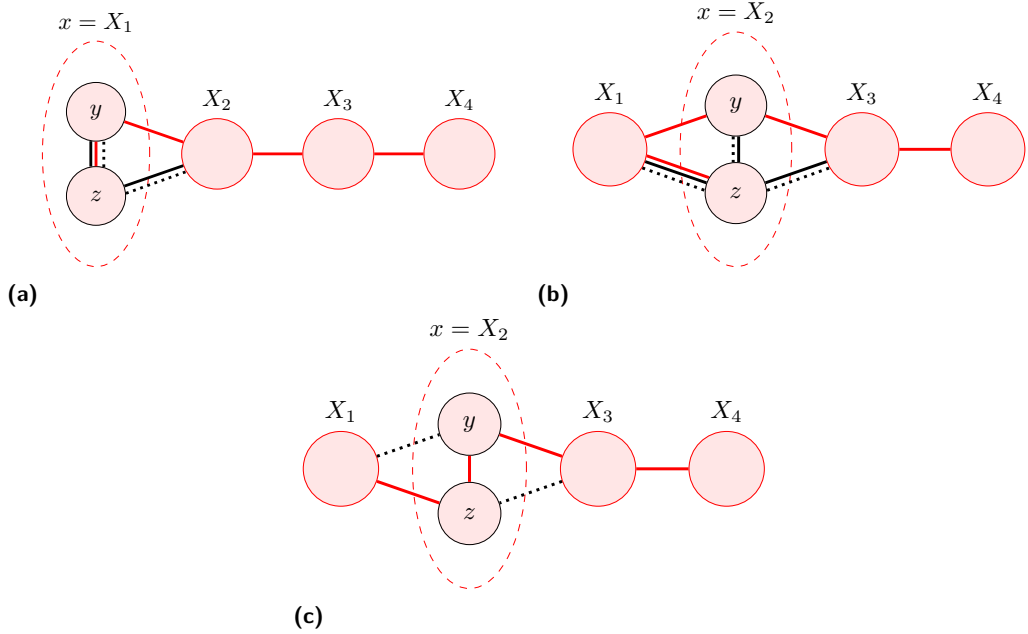

\end{proof}

\subsection{Concluding the Main Proof}

Observe that the basic assumptions of \Cref{thm:mainalt} are the same as those of \Cref{lem:step1}, and the assumptions of \Cref{lem:step2} follow from \Cref{lem:step1}.
Hence, as detailed in the formal proof below, \Cref{lem:step1} and \Cref{lem:step2} together imply \Cref{thm:mainalt}.

To extend this to a proof of \Cref{thm:main}, we use the following tool -- an excluded-grid theorem for tree-width in the currently strongest published formulation (modulo polylog factors which we have ``rounded up'' for simplicity):

\begin{theorem}[Chuzhoy and Tan~{\cite{DBLP:journals/jctb/ChuzhoyT21}}]\label{thm:grid10}
There is a function $f(n)\in \ca O(n^{10})$ such that, for every positive integer $N$, 
if a graph $G$ is of tree-width at least $f(N)$, then $G$ contains a subdivision of the $N\times N$ wall as a subgraph.
\end{theorem}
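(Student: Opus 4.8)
The statement is the excluded-grid theorem in its sharpest published form, due to Chuzhoy and Tan~\cite{DBLP:journals/jctb/ChuzhoyT21}; the plan is to reduce it to the grid-\emph{minor} version and then outline the Chekuri--Chuzhoy-style construction that yields a polynomial bound, isolating where the exponent comes from. First I would note that it suffices to force a grid minor rather than a wall subdivision. The $N\times N$ wall of \Cref{sec:prelims} is a spanning subgraph of the $N\times N$ planar grid -- delete the vertical grid edges in the positions forbidden by the parity rule -- so any graph containing a $g\times g$ grid minor with $g\ge N$ contains the $N\times N$ wall as a minor. Since the wall has maximum degree $3$, and a graph $H$ with $\Delta(H)\le 3$ is a minor of $G$ if and only if it is a topological minor of $G$ (inside the branch set of a degree-$\le 3$ vertex one finds a subtree with at most one branch vertex linking its $\le 3$ exit edges, which becomes the image of that vertex), a wall minor of $G$ is in fact realized by a wall subdivision in $G$. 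Hence it is enough to prove: $\mathrm{tw}(G)\ge \tilde{O}(g^{9})$ implies $G$ has a $g\times g$ grid minor; taking $g=N$ and rounding the polylogarithmic factor up to one extra power of $N$ gives $f(N)\in O(N^{10})$.

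For the grid-minor bound I would follow the pipeline of Chekuri and Chuzhoy, refined by Chuzhoy and then by Chuzhoy and Tan, which replaces the tower-type bound of Robertson and Seymour by polynomials, in three stages. (i) Large treewidth yields, with only a constant-factor loss, a \emph{well-linked} set $T$ with $|T| = \Omega(\mathrm{tw}(G))$, meaning that for every partition $T=T_1\cup T_2$ there are $\min(|T_1|,|T_2|)$ vertex-disjoint $T_1$--$T_2$ paths; equivalently one obtains a subgraph that routes like an expander across $T$. (ii) From $T$ one extracts a \emph{path-of-sets system}: pairwise disjoint connected clusters $S_1,\dots,S_\ell$, each carrying two disjoint interface sets of $w$ vertices well-linked inside the cluster, together with $w$ paths joining consecutive clusters whose interiors avoid all clusters. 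The quantitative crux is that $\mathrm{tw}(G)\ge \mathrm{poly}(\ell w)$ already forces such a system with $\ell$ and $w$ both of order $\mathrm{tw}(G)^{\Theta(1)}$, and optimizing this polynomial is essentially the whole game. (iii) Using the interface well-linkedness to realize prescribed matchings inside clusters with congestion $1$ (after a well-linked decomposition, which costs a further polynomial factor), one threads $k$ vertex-disjoint ``column'' paths through all the clusters and, in a dedicated group of clusters for each row, installs one ``row'' path meeting every column, assembling a $k\times k$ grid minor with $k=\min(\ell,w)^{\Theta(1)}$.

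The hard part will be stage (ii) together with the congestion control permeating (ii) and (iii): building a path-of-sets system that is long \emph{and} wide is a real tension, since every peeling step -- split off one cluster, re-establish well-linkedness of the surviving interface, reroute the flow linking the clusters -- loses a polynomial factor, driven by the $O(\log n)$ flow--cut gap for well-linked instances and by the grouping/bandwidth arguments needed to convert low-congestion routings into congestion-$1$ minor models. This is exactly the machinery Chuzhoy and Tan optimized to reach exponent $9$; a from-scratch proof would mean redoing those routing and decomposition lemmas carefully, which is where all the difficulty resides. By contrast, the wall-to-grid reduction above and the final grid assembly in stage (iii) are routine.
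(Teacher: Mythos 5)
This statement is not proved in the paper at all: it is imported verbatim as a black-box tool from Chuzhoy and Tan~\cite{DBLP:journals/jctb/ChuzhoyT21} (with the polylogarithmic factors ``rounded up'' into the exponent $10$), so there is no in-paper proof to compare your proposal against. Judged on its own terms, your proposal is an accurate roadmap of the published proof rather than a proof: the only step you actually carry out is the reduction from a grid minor to a wall subdivision, and that step is correct --- the paper's wall is indeed a spanning subgraph of the $N\times N$ grid, and since $\Delta\le 3$ a wall minor model can be converted into a subdivision, so a $\tilde{O}(N^9)$ treewidth-to-grid-minor bound yields the stated $f(N)\in O(N^{10})$ for wall subdivisions. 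Everything else --- well-linked sets, the path-of-sets system with polynomially related length and width, and the congestion-$1$ routing that assembles the grid --- is correctly identified but explicitly deferred, and that is where the entire content of the theorem lives; as you acknowledge, none of it is established in your write-up. So the proposal would not stand as a self-contained proof, but it does correctly identify the architecture of the result the paper cites, and the wall-versus-grid bookkeeping you supply is consistent with how the paper uses \Cref{thm:grid10} (note that the paper then passes from the wall subdivision to a cubic mesh via \Cref{obs:wall2mesh}, not through a grid minor). For the purposes of this paper, citing the theorem, as the authors do, is the right call.
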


\begin{proof}[Proof of \Cref{thm:main}]
Let $f$ be the function of \Cref{thm:grid10}, and choose $f_1(t)=f(2\cdot16\cdot(13t)^2+2)\in\ca O(t^{20})$.
If our graph $G$ has tree-width at least $f_1(t)$, then $G$ contains a subdivision of the $(2\cdot16\cdot(13t)^2+2)\times(2\cdot16\cdot(13t)^2+2)$ wall by \Cref{thm:grid10}.
Consequently, by \Cref{obs:wall2mesh}, $G$ contains a subgraph which is a $(16\cdot(13t)^2)\times(16\cdot(13t)^2)$ cubic mesh.

We now invoke \Cref{lem:step1}, assuming $G$ admits an uncontraction sequence of width at most~$2$;
so, we obtain the parts $X_1,X_2,X_3,X_4$ as claimed by \Cref{lem:step1} and required by \Cref{lem:step2}.
By the subsequent invocation of \Cref{lem:step2}, we immediately arrive at a contradiction to having the uncontraction sequence of width at most~$2$.
Consequently, the tree-width less than $f_1(t)=f(2\cdot16\cdot(13t)^2+2)\in\ca O(t^{20})$.
\end{proof}

\begin{proof}[Proof of \Cref{cor:tww2seq}]
Let $t$ be an integer and $\ca C$ a graph class such that no $G \in \ca C$ contains $K_{t,t}$ as a subgraph. 
Let $f_1$ be the polynomial function from (the proof of) \Cref{thm:main} and set $k:=f_1(t)$, which is a constant depending on $\ca C$. 
Let $G \in \ca C$ be the input graph. 
We first use the linear time algorithm of Bodlaender~\cite{DBLP:journals/siamcomp/Bodlaender96} to test whether $G$ has tree-width at most $k$. If the answer is no, then we know by \Cref{thm:main} that $G$ cannot have twin-width at most $2$. 
Otherwise, $G$ has tree-width at most $k$, and we easily turn the decomposition into a branch-decomposition of width at most~$k+1$.
This directly gives a boolean-width decomposition of width at most $k+1$~\cite{DBLP:conf/wg/AdlerBRRTV10} (with virtually the same decomposition tree). 
Finally, by the result of~\cite{DBLP:journals/jacm/BonnetKTW22}, from a boolean-width decomposition of $G$ of width $k+1$ one can obtain a contraction sequence of width at most $2^{k+2}-1$ and an easy inspection of the proof shows that this can be done in polynomial time. 

Thus, in the end (with respect to the fixed class $\ca C$) we compute in polynomial time a contraction sequence of $G$ of width at most $2^{\text{poly$(t)$}}$.
If the class $\ca C$ and implicit $t$ were to be considered as parameters, the discussed algorithm would have an FPT runtime.
\end{proof}


\subsection{Case of Twin-width 3}

Lastly, we show that in the class of graphs of twin-width~$3$, no upper bound on the tree-width is possible even if we exclude~$K_{2,2}$.


\begin{lemma}
	For every positive integer $N $, there exists an $(N^2+N)$-vertex graph with no $K_{2,2}$ subgraphs whose twin-width is at most 3  and tree-width is at least $N$.
\end{lemma}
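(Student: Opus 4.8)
The plan is to exhibit an explicit family $(G_N)_{N\ge 1}$ of $C_4$-free graphs on $N^2+N$ vertices and verify the two width bounds separately. A natural candidate is a suitably proportioned \emph{brick wall}: take $N{+}1$ vertex-disjoint paths $C_0,\dots,C_N$, each on $N$ vertices $c_{i,1},\dots,c_{i,N}$, and for every $0\le i<N$ add the rung edges $c_{i,j}c_{i+1,j}$ for all $j$ with $j\equiv i\pmod 2$. Then $|V(G_N)|=N(N{+}1)=N^2+N$, the graph is planar and subcubic, and---since two rungs between the same pair of columns lie at rows differing by at least two---$G_N$ has girth $6$; in particular it has no $K_{2,2}\simeq C_4$ subgraph, which settles that requirement at once. (If a constant factor more vertices turns out to be needed to push the tree-width all the way to $N$, one simply rescales $N$; alternatively one may start from the $N\times N$ grid and delete a minimum edge set meeting all of its $4$-cycles, again padded to $N^2+N$ vertices.)

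\textbf{Tree-width at least $N$.} I would lower-bound the tree-width by a bramble of order $N{+}1$ (equivalently, a large grid minor). In $G_N$ the $N{+}1$ ``vertical'' column paths $C_0,\dots,C_N$, together with the ``horizontal'' zig-zag paths that thread consecutive columns through the rungs, play the roles of the two pencils of lines of a grid; taking each \emph{cross} (a column path together with a horizontal zig-zag, which always share a vertex) yields a bramble in which every two elements touch and no $N$ vertices hit all of them. That two crosses touch is immediate, and bounding the hitting number is the standard grid-bramble computation; the only point needing care is that each zig-zag really does span all the columns, which is exactly where the parity pattern of the rungs is used. Equivalently---and perhaps cleaner to write up---one contracts pairs of neighbouring brick columns to exhibit an $\Omega(N)\times\Omega(N)$ grid \emph{minor} and invokes that minors do not increase tree-width.

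\textbf{Twin-width at most $3$.} This is the crux and essentially the only new work. Dually, I must give an uncontraction sequence of $G_N$ of width $\le 3$: a refinement of $\{V(G_N)\}$ down to singletons such that every intermediate partitioned trigraph has all red degrees at most $3$. A ``prefix''-type refinement is useless, since it would bound only the \emph{pathwidth}, which here is $\ge N$; the sequence must therefore be a genuinely two-dimensional sweep, in which every part stays a small connected patch of the wall (viewed as drawn in the plane) and the patches are coarsened in a carefully chosen geometric order, using both that $G_N$ is subcubic and that its rungs occur only at alternating positions, so that no patch ever acquires a fourth red neighbour. Designing this sweep and, above all, checking the red-degree bound at each single pair-contraction is the technical heart, and the step I expect to be the real obstacle. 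Finally, ``$3$'' is best possible: once $N$ is large enough, $G_N$ contains a large cubic mesh by \Cref{obs:wall2mesh}, so by \Cref{thm:mainalt} applied with $t=2$ its twin-width is exactly $3$; concretely, \Cref{lem:step1} and \Cref{lem:step2} show that any width-$2$ uncontraction of $G_N$ would be forced through, and then could not maintain, the configuration of four big parts forming a red path together with $\ge 4t$ pairwise vertex-disjoint paths between its two end parts.
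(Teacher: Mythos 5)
Your proposal has a genuine gap, and you flag it yourself: you never actually produce a width-$3$ uncontraction sequence for your graph, and that is the entire content of the lemma. The $C_4$-freeness and tree-width parts are routine (the paper gets tree-width $\ge N$ even more directly, via a $K_{N,N}$ minor), but the twin-width upper bound is the heart of the matter, and ``designing this sweep'' is left as a hope rather than an argument. It is, moreover, far from clear that a brick wall even has twin-width $\le 3$: in a triangle- and $C_4$-free subcubic graph, contracting two adjacent degree-$3$ vertices already yields red degree $4$, there is no obvious invariant keeping the red degree at $3$ during a planar sweep, and wall-like planar graphs are generally expected to require twin-width $4$.

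The paper's construction is engineered to sidestep exactly this difficulty. It takes $N$ vertex-disjoint paths $P_1,\dots,P_N$ on $N$ vertices each and adds, for each $i\in[N]$, a \emph{transversal} vertex $u_i$ adjacent to the $i$-th vertex of every path. This graph is $C_4$-free, contracts to a $K_{N,N}$ minor (collapse each $P_j$), and---crucially---has an explicit width-$3$ contraction sequence: zip $P_1$ and $P_2$ together by contracting their $i$-th vertices in order $i=1,2,\dots,N$. At each step the newly created vertex has at most three red neighbours, precisely because $u_i$ is adjacent to both endpoints of the contracted pair and therefore stays a \emph{black} neighbour; after the zip the trigraph is isomorphic to $G-P_2$ with $P_1$ red, so one iterates, and the final red path with its pendant $u_i$'s is trivially finished with red degree $\le 2$. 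It is these transversal vertices---absent from your brick wall---that make the red-degree invariant work, and without exhibiting a concrete sequence for your graph the twin-width bound is not established.
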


\begin{figure}[tbh]
	\centering
	\includegraphics[width=0.7\linewidth]{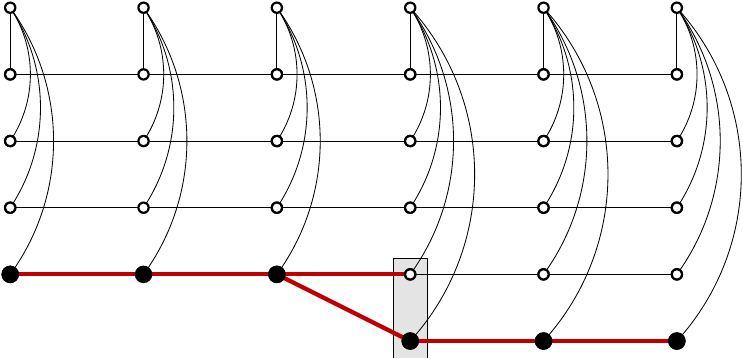}
	\caption{Illustration of the $K_{2,2}$-free graph of twin-width 3 and tree-width $\theta(\sqrt{n})$.
	It represents a partitioned trigraph of a contraction sequence of width 3: white-filled vertices are singletons and the gray rectangle reveals the next contraction.}
	\label{fig:tww3example}
\end{figure}

\begin{proof}
	Let $N$ be a positive integer.
	We take $G$ the graph obtained from the disjoint union of $N$ paths $P_1,\dots,P_N$ of length $N$, and for each $i\in [N]$, we add a new vertex adjacent to the $i$-th vertex of each path $P_1,\dots,P_N$.
	See \Cref{fig:tww3example} for an illustration.
	Obviously $G$ has $N^2+N$ vertices and no $K_{2,2}$ as subgraph.
	
	Observe that by contracting each path $P_1,\dots,P_N$, we obtain the complete bipartite graph $K_{N,N}$.
	As $G$ admits $K_{N,N}$ as a minor, we deduce that its tree-width is at least $N$.
	
	We claim that the twin-width of $G$ is at most 3. We can assume without loss of generality that the edges of $P_1$ are red.
	We contract $P_1$ and $P_{2}$ by iteratively contracting their $i$-th vertices as illustrated in \Cref{fig:tww3example}.
	The maximum red degree of each encountered trigraph is $3$ and we end up with a trigraph isomorphic to $G-P_{2}$.
	We can repeat this operation to end up with a trigraph isomorphic to $G- (P_2\cup \dots \cup P_N) $.
	Then we can contract each pendant vertex with its unique neighbor to obtain a red path whose twin-width is obviously at most~$2$.
	We conclude that the twin-width of $G$ is at most $3$.
\end{proof}

\section{Conclusions}


We have shown that sparse classes of graphs of twin-width $2$ have bounded tree-width. This means that the existing rich machinery of structural and algorithmic tools for tree-width is applicable to such graph classes. 

One might wonder how one can relax the requirement on $\ca C$ being $K_{t,t}$-free to relate graphs of twin-width $2$ to other well-studied graph notions.
One could even try to drop any restrictions altogether and conjecture that any class of twin-width at most $2$ has bounded clique-width. This cannot be true, since the class of unit interval graphs has twin-width $2$ and unbounded clique-width \cite{gr00}. However, we conjecture the following.


\begin{conjecture}
Let $\ca C$ be a stable class of graphs of twin-width at most $2$. Then $\ca C$ has bounded clique-width.
\end{conjecture}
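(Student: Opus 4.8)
The plan is to reduce everything to the $K_{t,t}$-free situation already handled by \Cref{thm:main}. Assume $\ca C$ is stable and of twin-width at most $2$; we may take $\ca C$ hereditary, since passing to induced subgraphs preserves stability, twin-width, and the desired conclusion. If there is an integer $t$ such that no $G\in\ca C$ contains $K_{t,t}$ as a subgraph, then \Cref{thm:main} bounds the tree-width of $\ca C$, and bounded tree-width implies bounded clique-width, so we are done. Hence we may assume that $\ca C$ contains arbitrarily large bicliques, and the entire difficulty is to argue that, under stability, such bicliques do not prevent a bounded-clique-width (equivalently, bounded rank-width) decomposition.

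The route I would take is sparsification together with transductions. It suffices to realise $\ca C$ as a first-order transduction of some class $\ca D$ that is $K_{s,s}$-free for a fixed $s$ \emph{and} of twin-width at most $2$: indeed, \Cref{thm:main} then bounds the tree-width of $\ca D$, hence its clique-width, and bounded clique-width is preserved under first-order transductions, so $\ca C$ has bounded clique-width. For comparison, the known fact that stable classes of bounded twin-width have structurally bounded expansion~\cite{stable_tww} already yields a first-order transduction onto $\ca C$ from a class of bounded expansion (in particular $K_{s,s}$-free), which is strong evidence that such a sparsification exists; the extra content we need is that the sparse side can be taken of twin-width at most $2$ rather than merely of bounded twin-width. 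That the value $2$ is essential here is exactly what the $K_{2,2}$-free twin-width-$3$ example presented earlier in this paper shows.

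To build such a $\ca D$ I would work directly with a width-$\le 2$ contraction sequence of a fixed $G\in\ca C$. In every trigraph of the sequence the red graph has maximum degree $2$, so it is a disjoint union of paths and cycles, and the complete-bipartite relationships recorded by black edges are therefore laid out very rigidly (each part meets at most two red edges, and \Cref{obs:rededgeonly}-style arguments pin down where black edges between large parts may sit). I would sparsify $G$ by replacing, uniformly along the whole sequence, each black edge $\{A,B\}$ by a single fresh marker vertex adjacent to $A\cup B$ (or by a comparably small local gadget), obtaining a graph $G'$, and then verify: (i) $G'$ still admits a contraction sequence of width at most $2$, obtained by simulating the original one and uncontracting markers at the right moments; (ii) $G'$ is $K_{s',s'}$-free for a fixed $s'$; and (iii) $G$ is recoverable from $G'$ by one fixed first-order transduction that reads the bicliques off the markers. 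Setting $\ca D=\{G':G\in\ca C\}$ would then finish the argument via the previous paragraph.

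The main obstacle is item (i): showing that this sparsification never forces the red degree above $2$. Twin-width is not monotone under edge deletion, so nothing here is automatic, and the rigidity analysis of width-$\le 2$ sequences needed to control the markers is in the same spirit as --- and presumably harder than --- \Cref{lem:step1} and \Cref{lem:step2}. An equivalent phrasing of the difficulty is to prove a structure theorem to the effect that every $G\in\ca C$ arises from a graph of bounded tree-width by performing a bounded number of ``biclique twists'' on vertex subsets, and then to read off bounded rank-width; either way, disentangling the width-$2$ constraint from the large bicliques that stability permits is the crux, and is why the statement remains a conjecture.
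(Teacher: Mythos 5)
This statement is left as an open conjecture in the paper --- the authors offer no proof of it, so there is nothing to compare your argument against. What you have written is, by your own admission in the final paragraph, a proof \emph{plan} with its central step missing, not a proof. The gap is exactly where you locate it, but it is worth spelling out why it is serious rather than a technicality. Your item (i) asks that the sparsified graph $G'$ still admit a contraction sequence of width at most $2$, and there is no reason to expect this: the paper's own twin-width-$3$ example (a union of paths with $N$ apex-like vertices) is precisely a very sparse, $K_{2,2}$-free graph whose twin-width already exceeds $2$, which shows that ``sparse'' and ``twin-width at most $2$'' is a genuinely restrictive combination that an ad hoc sparsification will not automatically land in. Moreover the construction itself is not well defined as stated: the set of black edges of the partitioned trigraph changes at every step of the sequence (a black edge $\{A,B\}$ at step $i$ becomes several black edges, or disappears into parts of size below $t$, after further splits), so ``replacing each black edge by a marker, uniformly along the whole sequence'' does not specify a single graph $G'$; and whichever bicliques you do choose to replace, the marker vertices must themselves be scheduled into the simulated uncontraction sequence, where each one arrives with red edges to the parts it marks and threatens the degree bound.

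The surrounding reductions are fine as far as they go --- bounded tree-width implies bounded clique-width, and bounded clique-width is preserved under first-order transductions, so a transduction of $\ca C$ from a $K_{s,s}$-free class of twin-width at most $2$ would indeed suffice via \Cref{thm:main}. But that reduction only relocates the conjecture; it does not resolve it. The known result that stable classes of bounded twin-width have structurally bounded expansion gives you a sparse preimage class of \emph{some} bounded twin-width, and upgrading ``some bound'' to ``at most $2$'' on the sparse side is, as you note, the entire content of the problem. As it stands your text is a reasonable research programme, not a proof, and it should not be presented as the latter.
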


Here being stable means that there exists $k$ such that no $G \in \ca C$  contains a half-graph  of order $k$ as a semi-induced subgraph. We refer to~\cite{stable_rw,stable_tww} for details about stability and a discussion on how it relates to tree-width, clique-width and twin-width.

\bibliography{tww}

\end{document}